\theoremstyle{plain}
\newtheorem{thm}{\protect\theoremname}[section]
\theoremstyle{remark}
\newtheorem*{acknowledgement*}{\protect\acknowledgementname}
\theoremstyle{definition}
\newtheorem{defn}[thm]{\protect\definitionname}
\theoremstyle{plain}
\newtheorem{prop}[thm]{\protect\propositionname}
\theoremstyle{plain}
\newtheorem{lem}[thm]{\protect\lemmaname}
\theoremstyle{plain}
\newtheorem{cor}[thm]{\protect\corollaryname}
\newcommand{\lyxaddress}[1]{
	\par {\raggedright #1
	\vspace{1.4em}
	\noindent\par}
}
\date{}
\providecommand{\acknowledgementname}{Acknowledgement}
\providecommand{\corollaryname}{Corollary}
\providecommand{\definitionname}{Definition}
\providecommand{\lemmaname}{Lemma}
\providecommand{\propositionname}{Proposition}
\providecommand{\theoremname}{Theorem}
\begin{document}
\title{Moments of finitary factor maps between Bernoulli processes}
\author{Uri Gabor\thanks{Supported by ISF grants 1702/17 and 3056/21}}
\maketitle
\begin{abstract}
The problem of what moments can exist for the coding radius of a finitary
map between two i.i.d. processes, has been extensively studied in
the case of $\mathbb{Z}$-processes. Here we treat this problem for
factor maps between $\mathbb{Z}^{d}$-processes ($d>1$). By modeling
the homomorphism with a map between spaces of finite sequences, we
extend Harvey and Peres' result, showing that for a finitary homomorphism
between two i.i.d. processes of equal entropy, if the coding radius
of the map has a finite $\frac{d}{2}$-moment, then the two processes
share the same informational variance. We use our modeling technique
to prove a ``Schmidt-type theorem'' - that in case the above homomorphism
has a coding radius of exponential tails, then the two processes are
essentially the same. This result appears to be new even for the one-dimensional
case, addressing a question of Angel and Spinka \cite{key-8}.
\end{abstract}

\section{Introduction}

The celebrated result of Keane and Smorodinsky, that two i.i.d. processes
of the same entropy are finitary isomorphic, was immediately followed
by the question of whether there is in general such a finitary isomorphism
which in addition exhibits a finite expected coding radius. Here,
the coding radius of a map $\phi:(A^{\mathbb{Z}^{d}},\mu)\rightarrow(B^{\mathbb{Z}^{d}},\nu)$
is the random variable 
\[
R_{\phi}(x)=\min\{n\in\mathbb{N}\cup\{\infty\}:\text{For \ensuremath{\mu}-a.e. }a,(a_{i})_{i\in B(0,n)}=(x_{i})_{i\in B(0,n)}\Rightarrow\phi(a)_{0}=\phi(x)_{0}\}.
\]
In case $R_{\phi}$ is a.s. finite, $\phi$ is said to be finitary,
while if the coding radius has finite expectation $\mathbf{E}[R_{\phi}]<\infty$
or finite $\alpha$-moment $\mathbf{E}[R_{\phi}^{\alpha}]<\infty$
for some $\alpha>0$, then $\phi$ is said to exhibit a finite expected
coding radius or finite $\alpha$-moment respectively.

This question was first answered by Parry \cite{key-4}, who showed
that for an isomorphism $\phi$ between two $\mathbb{Z}$-indexed
i.i.d. processes $X$ and $Y$, if the coding radii of both $\phi$
and $\phi^{-1}$ have finite expectation, then the one-dimensional
marginals of $X$ and $Y$ share the same informational variance.
That is, writing $p=\left(p_{a}\right)_{a\in A}$ and $q=\left(q_{b}\right)_{b\in B}$
for the marginal probabilities of $X_{0}$ and $Y_{0}$ respectively,
the information functions of $X_{0}$ and $Y_{0}$ are the random
variables $I_{p},I_{q}$ with domains in $A$ and $B$ respectively,
defined by
\[
\begin{array}{ccc}
I_{p}(a)=\log p_{a}^{-1} & \hfill & I_{q}(b)=\log q_{b}^{-1}\end{array},
\]
 and the informational variance of $X_{0}$ is just $\text{Var}(I_{X_{0}})$.
Assuming the processes have equal entropy, having the same informational
variance reduces to the following equality:
\begin{equation}
\sum_{a\in A}p_{a}\log^{2}p_{a}^{-1}=\sum_{b\in B}q_{b}\log^{2}q_{b}^{-1}\label{eq:-6}
\end{equation}
which does not hold for two i.i.d. processes of equal entropy in general.
Krieger \cite{key-3} gave examples of a different nature for circumstances
where the isomorphism between two i.i.d. processes, although it exists,
cannot have a finite expected moment. Based on these works together
with \cite{key-5}, Schmidt \cite{key-6} showed that two i.i.d. processes
can be finitarily isomorphic with finite expected coding radius only
if their marginal probabilities are permutations of each other.

The question for other moments was addressed by Harvey and Peres in
\cite{key-1}, where it is shown that having merely a homomorphism
$\phi$ between two i.i.d. processes of equal entropy that exhibits
a finite half moment, i.e. 
\[
\mathbf{E}[R_{\phi}^{\frac{1}{2}}]<\infty,
\]
already restricts the marginal probabilities of the two processes
to share the same informational variance as in (\ref{eq:-6}). Recently
we showed \cite{key-7} that this restriction is tight, in the sense
that any two i.i.d. processes of equal entropy are isomorphic by a
finitary map $\phi$ s.t. both $R_{\phi}$ and $R_{\phi^{-1}}$ exhibit
finite $\theta$-moments for all $\theta<\frac{1}{2}$.

In this work we treat this problem for $\mathbb{Z}^{d}$-indexed i.i.d.
processes. Most of the above methods heavily rely on the special geometric
structure of $\mathbb{Z}$, in which the boundary of an interval $I\subset\mathbb{Z}$
is of a constant size $|\partial I|=O(1)$, regardless of the size
of the interval $|I|$. In contrast, for a cube $B(\rho,n):=[\rho-n,\rho+n]^{d}$
in $\mathbb{Z}^{d}$, the boundary is of size $|\partial B(\rho,n)|=\Theta(n^{d-1})$.
Thus, these results cannot be extended in a straight forward manner
to higher dimensional processes. (However, it should be pointed out
that the mentioned results of Krieger can be easily extended to higher
dimensions, providing the first examples for $\mathbb{Z}^{d}$-indexed
i.i.d. processes of the same entropy with no isomorphism between them
exhibiting a finite expected coding radius.) Roughly speaking, the
basic idea in all the above results is that the bound on the coding
radius implies that for a given interval $I\subset\mathbb{Z}$, with
high probability, a large fraction of the image symbols $\left(\phi(X)_{i}\right)_{i\in I}$
can be encoded by reading only $\left(X_{i}\right)_{i\in I}$. This
imposes a strong connection between the distributions of $\left(X_{i}\right)_{i\in I}$
and $\left(Y_{i}\right)_{i\in I}$, which in turn restricts the marginal
distributions $X_{0}$ and $Y_{0}$ to share a common behavior. Thus
for example, the probability that a given site $\phi(X)_{i}$ is not
determined by $\left(X_{i}\right)_{i\in I}$ is at most $\mathbf{P}[R_{\phi}>d(i,I^{c})]$,
and the expected number of indices $i\in I$ where $\phi(X)_{i}$
is not determined by $\left(X_{i}\right)_{i\in I}$ is bounded by
\[
\sum_{i\in I}\mathbf{P}[R_{\phi}>d(i,I^{c})]=O(\sum_{n=1}^{\infty}\mathbf{P}[R_{\phi}>n])=O(\mathbf{E}[R_{\phi}])
\]
so that having a finite expected coding radius implies a good bound
on the expected number of non-encoded sites. In contrast, applying
the same method for a cube in $\mathbb{Z}^{d}$ with $d>1$ ends up
with a useless bound that tends to infinity as the size of the cube
grows.

To overcome this problem, we abandon the intrinsic structure of the
mapping $\phi:X\longmapsto Y$, and use a finite approximation of
it, $\hat{\phi}:\hat{X}\longmapsto\hat{Y}$, where $\hat{X}$ and
$\hat{Y}$ are processes labeled over a $d$-dimensional torus $\mathbb{T}_{N}^{d}=\mathbb{Z}^{d}/(N\mathbb{Z})^{d}$.
In these processes, the set of indices $\mathbb{T}_{N}^{d}$ is finite,
yet with no boundary points. The new processes has enough in common
with the original processes, so that imposing restrictions on the
connection between the finite processes can be then pulled back to
give a connection between the original processes. This method is explained
in Section \ref{sec:The--model}. With this in hand, we are able to
extend the result of Harvey and Peres mentioned before to all dimensions:
\begin{thm}
\label{thm:For-any-i.i.d.-1}For any i.i.d. $\mathbb{Z}^{d}$-processes
$X$ and $Y$ of equal entropy $h$, if there exists a finitary map
$\phi:X\rightarrow Y$ with $\mathbf{E}[R_{\phi}^{d/2}]<\infty$,
then $\text{Var}(I(X_{0}))=\text{Var}(I(Y_{0}))$.
\end{thm}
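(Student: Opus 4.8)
\emph{Proof idea.} The plan is to transport the Harvey--Peres mechanism onto the torus model $\hat\phi:\hat X\to\hat Y$ over $\mathbb{T}_{N}^{d}=\mathbb{Z}^{d}/(N\mathbb{Z})^{d}$ of Section~\ref{sec:The--model}, where there is no boundary to ruin the bookkeeping, and then let $N\to\infty$. Write $h=H(p)=H(q)$, $\sigma_{X}^{2}=\text{Var}(I(X_{0}))$, $\sigma_{Y}^{2}=\text{Var}(I(Y_{0}))$, and for $a\in A^{\mathbb{T}_{N}^{d}}$, $b\in B^{\mathbb{T}_{N}^{d}}$ put $I_{p}(a)=\sum_{t}\log p_{a_{t}}^{-1}$ and $I_{q}(b)=\sum_{t}\log q_{b_{t}}^{-1}$; recall $\hat X$ is i.i.d.\ with marginal $p$ and $\hat Y$ is i.i.d.\ with marginal $q$, so $I_{p}(\hat X)$ is a sum of $N^{d}$ i.i.d.\ terms with mean $N^{d}h$ and variance $N^{d}\sigma_{X}^{2}$, and likewise $I_{q}(\hat Y)$. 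The map $\hat\phi$ is obtained by applying $\phi$ to the $N$-periodic lift of a torus configuration; call a site $t$ \emph{bad} for $\hat x$ if the coding radius of $\phi$ at $t$ in that lift exceeds $N/3$, so that at every good site $\hat\phi(\hat x)_{t}$ is a local function of $\hat x$ whose coding window injects into $\mathbb{T}_{N}^{d}$, and moreover on good sites every bounded window of $\hat W:=\hat\phi(\hat X)$ has the same law as the corresponding window of $\hat Y$ up to total variation $o(N^{-d/2})$. The combinatorial heart of the setup, and the reason the exponent is exactly $d/2$, is that $\mathbf{E}[R_{\phi}^{d/2}]<\infty$ forces $\mathbf{P}[R_{\phi}>m]=o(m^{-d/2})$, so the bad set $\hat B=\hat B(\hat X)$ satisfies $\mathbf{E}|\hat B|\le N^{d}\,\mathbf{P}[R_{\phi}>N/3]=o(N^{d/2})$ and hence $|\hat B|=o(N^{d/2})$ in probability -- negligible against the $\Theta(N^{d/2})$ fluctuations of $I_{p}(\hat X)$. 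On a cube $B(\rho,n)\subset\mathbb{Z}^{d}$ one cannot avoid a boundary shell of $\Theta(n^{d-1})$ forced-bad sites, and $n^{d-1}\ge n^{d/2}$ once $d\ge2$; removing this obstruction is exactly what the torus buys us.

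The key lemma I would aim for is that the modeled map is \emph{almost measure preserving at scale $N^{d/2}$}: writing $\hat\nu_{N}$ for the law of $\hat W$, the information content $-\log\hat\nu_{N}(\hat W)$ should (i) have $\mathbf{E}[-\log\hat\nu_{N}(\hat W)]=H(\hat W)=N^{d}h-o(N^{d/2})$, i.e.\ $\hat\phi$ loses only $o(N^{d/2})$ bits, and (ii) after centering by $N^{d}h$ and rescaling by $N^{d/2}$ converge in law to $\mathcal{N}(0,\sigma_{Y}^{2})$ -- the same Gaussian as for the genuine i.i.d.\ process $\hat Y$. Given the window estimate, (i) is equivalent to $D_{KL}(\hat\nu_{N}\,\|\,q^{\otimes\mathbb{T}_{N}^{d}})=o(N^{d/2})$, the marginal part of which already pins $\mathbf{E}[I_{q}(\hat W)]$ to within $o(N^{d/2})$ of $N^{d}h$; for (ii) one analyses $\hat W$ as a weakly dependent ``locally i.i.d.-$q$'' field, using that two sites $s,t$ are independent unless their coding windows overlap, an event of probability $o(|s-t|^{-d/2})$. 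This step carries essentially all the difficulty, and I expect it to be the main obstacle: the $d/2$-moment is critically tuned -- exactly as Harvey and Peres' $\tfrac{1}{2}$-moment is tuned when $d=1$ -- so the entropy-deficit bound $H(\hat W)\ge N^{d}h-o(N^{d/2})$ and the central limit theorem for $I_{q}(\hat W)$ must be squeezed out of covariance/fiber estimates that leave no slack, and in particular there is no room for a softer ``all sites good with high probability'' argument, since $\mathbf{E}|\hat B|\to\infty$.

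Granting the lemma, the conclusion is short and, pleasantly, uses only the single (possibly non-invertible) map $\phi$. The elementary fiber inequality $\hat\nu_{N}(\hat\phi(\hat x))\ge\mathbf{P}[\hat X=\hat x]=e^{-I_{p}(\hat x)}$ gives $-\log\hat\nu_{N}(\hat W)\le I_{p}(\hat X)$ pointwise, hence a coupling in which the law of $-\log\hat\nu_{N}(\hat W)$ is stochastically dominated by that of $I_{p}(\hat X)$. Since in that coupling the difference $I_{p}(\hat X)-(-\log\hat\nu_{N}(\hat W))$ is nonnegative with expectation $N^{d}h-H(\hat W)=o(N^{d/2})$ by part (i), it is $o(N^{d/2})$ in $L^{1}$, so the two laws, centered by $N^{d}h$ and rescaled by $N^{d/2}$, have the same limit. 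That limit is $\mathcal{N}(0,\sigma_{X}^{2})$ because $I_{p}(\hat X)$ is an exact i.i.d.\ sum, and it is $\mathcal{N}(0,\sigma_{Y}^{2})$ by part (ii) of the lemma; therefore $\sigma_{X}^{2}=\sigma_{Y}^{2}$, which is the assertion $\text{Var}(I(X_{0}))=\text{Var}(I(Y_{0}))$.
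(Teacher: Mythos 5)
Your setup is the right one (the torus model, and the observation that $\mathbf{E}[R_{\phi}^{d/2}]<\infty$ gives $\mathbf{P}[R_{\phi}>n]=o(n^{-d/2})$ and hence an expected bad set of size $o(N^{d/2})$, negligible against the $\Theta(N^{d/2})$ fluctuations), and your closing deduction (pointwise domination plus an $o(N^{d/2})$ bound in $L^{1}$ on the nonnegative difference forces the two rescaled limits to coincide) is sound in spirit. But the proposal has a genuine gap exactly where you flag it: your ``key lemma'' is not proved, and it is not a routine step. Part (ii) asks for a CLT for $-\log\hat\nu_{N}(\hat W)$ under the pushforward law $\hat\nu_{N}$ itself; this quantity is \emph{not} a sum of local observables, so the ``weakly dependent, locally i.i.d.-$q$ field'' heuristic does not apply to it directly --- it applies at best to $I_{q}(\hat W)$, and converting between the two is precisely your part (i), i.e.\ $H(\hat W)\ge N^{d}h-o(N^{d/2})$, equivalently $H(\hat X\mid\hat W)=o(N^{d/2})$. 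For a mere factor map (no inverse is assumed) there is no evident reason why $\hat\phi^{n}$ should be nearly injective at this very fine scale; equal process entropy only gives $o(N^{d})$-type statements, and nothing in the hypothesis obviously upgrades this to $o(N^{d/2})$ for the finite torus window. So both halves of the lemma are open, and they carry the entire difficulty.

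The paper's proof is engineered to avoid exactly these two steps. Instead of the pushforward measure $\hat\nu_{N}$, it uses Lemma \ref{lem:For-any-,}: $\mu([\hat x])\le\nu([\hat\phi^{n}(\hat x)])$, where $\nu$ is the genuine i.i.d.\ law of $Y$ and $[\hat\phi^{n}(\hat x)]$ is the cylinder of points compatible with the image on its non-$*$ sites. This replaces $-\log\hat\nu_{N}(\hat W)$ by the \emph{local} sum $\sum_{u:\hat Y_{u}\in B}\log q(\hat Y_{u})^{-1}$, which is pointwise at least $\sum_{u}\log p(\hat X_{u})^{-1}$. No entropy identity for $\hat W$ and no CLT for a dependent field are ever needed: the blockwise coupling of Proposition \ref{prop:}(\ref{enu:Suppose--is}) (with the $2^{d}+1$ pieces $\mathcal S_{0},\dots,\mathcal S_{2^{d}}$, and with $n=\lfloor\delta_{N}N\rfloor$, $\delta_{N}\downarrow0$, so the main block has a $1-o(1)$ fraction of the sites) transfers the sum over $\hat Y$ to a sum over the true i.i.d.\ $Y$ at a cost $O(\mathbf{E}|\mathcal J^{c}|)=o(N^{d/2})$, after which the CLT is applied only to exact i.i.d.\ sums; the one-sided inequality plus the $o(1)$ expectation gap is then converted into equality of variances via positive parts and the identity $\mathbf{E}[Z]^{+}=\sigma/\sqrt{2\pi}$ for a centered Gaussian (Proposition \ref{prop:The-following-holds:}). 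If you want to complete your argument, the efficient fix is to abandon $\hat\nu_{N}$ and adopt this comparison with $\nu$ on compatible cylinders; as written, your lemma is an acknowledged obstacle rather than a proof.
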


It seems that the analog in high dimension for having a finite expected
moment is having a finite $d$-moment. In this case, we could not
figure out whether Schmidt's theorem extends to higer dimensions or
not. However, in Section \ref{sec:A-Schmidt-type-theorem} we use
our modeling technique to prove a ``Schmidt-type'' theorem for the
existence of finitary factor map with exponential tails between two
i.i.d. processes of the same entropy. Harvey, Holroyd, Peres, and
Romik proved the existence of such a map between any two i.i.d. processes,
provided that the factor process has entropy strictly lesser than
the domain process's entropy (they prove their result to any Markov
chain with finite state space). Since then, more families of processes
were shown to be finitary factors with exponential tails of an i.i.d.
process. Common to all these results is the assumption on the entropy
difference between the two processes in question. Angel and Spinka
\cite{key-8} raised the question of whether this assumption can be
removed for any Markov Chain. Our result, restricted to $d=1$, addresses
this question for the case of i.i.d. processes, showing that the removal
of the entropy-gap assumption is possible only when the two processes
are completely identical:
\begin{thm}
\label{thm:Let--and-1}Let $X$ and $Y$ be two i.i.d. $\mathbb{Z}^{d}$-processes
of equal entropy $h$, with marginal measures $p,q$ respectively.
Suppose that for some constant $c>0$ there is a homomorphism $\phi(X)=Y$
that satisfies $\mathbf{P}[R_{\phi}>n]=O(e^{-cn})$. Then $p=q$ up
to permutation on the symbols.
\end{thm}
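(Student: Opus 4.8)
The plan is to work entirely inside the finite torus model $\hat\phi\colon\hat X\to\hat Y$ over $\mathbb T_N^d$ provided by the modeling technique of Section~\ref{sec:The--model}, and to upgrade the conclusion of Theorem~\ref{thm:For-any-i.i.d.-1} from \emph{``$\Lambda_p$ and $\Lambda_q$ agree to second order at $0$''} to \emph{``$\Lambda_p\equiv\Lambda_q$''}, where $\Lambda_p(t)=\log\sum_{a\in A}p_a^{1-t}$ and $\Lambda_q(t)=\log\sum_{b\in B}q_b^{1-t}$ are the logarithmic moment generating functions of the information functions $I(X_0),I(Y_0)$; here $\Lambda_p(0)=\Lambda_q(0)=0$ and $\Lambda_p'(0)=\Lambda_q'(0)=h$. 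These functions are real-analytic on $\mathbb R$, and $\Lambda_p\equiv\Lambda_q$ on any neighbourhood of $0$ already forces, via $\sum_a p_a^{r}=\sum_b q_b^{r}$ for all $r$ (letting $r\to0^+$ gives $|A|=|B|$, and the power sums for $r=1,2,\dots$ then determine the multisets $\{p_a\},\{q_b\}$ by Newton's identities), that $p$ and $q$ are permutations of one another. So it suffices to prove $\Lambda_p\equiv\Lambda_q$ near $0$; by Legendre duality, which is local near the common mean $h$, this is equivalent to showing that the large deviation rate functions of $\tfrac1{N^d}I_p(\hat X)$ and $\tfrac1{N^d}I_q(\hat Y)$ agree near $h$, where $I_p(\hat X)=\sum_i\log p_{\hat X_i}^{-1}$ and $I_q(\hat Y)=\sum_i\log q_{\hat Y_i}^{-1}$.

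First I would cash in the exponential tails. Since $\mathbf P[R_\phi>n]=O(e^{-cn})$, in the torus model each coordinate $\hat Y_i$ is a deterministic function of $\hat X$ restricted to a ball of radius $<N/3$ off an event of probability $O(e^{-cN/3})$; hence the finite-dimensional marginals of $\hat Y$ are within $e^{-\Omega(N)}$ of those of an i.i.d.-$q$ field, and more generally the local statistics of $\hat\phi$ run on the i.i.d.-$p$ input, and on its mild exponential tilts, match those of $\phi$ run on the corresponding $\mathbb Z^d$ input up to $e^{-\Omega(N)}$. Consequently $\tfrac1{N^d}I_p(\hat X)$ obeys a large deviation principle with rate $\Lambda_p^\ast$ (the i.i.d.\ case), and $\tfrac1{N^d}I_q(\hat Y)$ obeys one with rate $\Lambda_q^\ast$.

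Next comes the entropy bookkeeping. Since $\hat Y=\hat\phi(\hat X)$ is a deterministic function, $H(\hat Y\mid\hat X)=0$; tiling $\mathbb T_N^d$ by sub-boxes of side $(\log N)^2$ and noting that the coding regions of two distinct sub-boxes overlap only in an annulus of codimension one, one obtains $H(\hat Y)\ge N^dh-O(N^{d-1}\,\mathrm{polylog}\,N)$, whence $H(\hat X\mid\hat Y)=H(\hat X)-H(\hat Y)=O(N^{d-1}\,\mathrm{polylog}\,N)$ and likewise $D\bigl(\mathrm{law}(\hat Y)\,\big\|\,q^{\otimes\mathbb T_N^d}\bigr)=O(N^{d-1}\,\mathrm{polylog}\,N)$; in particular, by Fano, $\hat X$ is recovered from $\hat Y$ off an event of probability $o(1)$. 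Thus $\hat\phi$ is a measure isomorphism $p^{\otimes\mathbb T_N^d}\to q^{\otimes\mathbb T_N^d}$ up to a defect negligible at the exponential scale $e^{-\Theta(N^d)}$, so $-\log q^{\otimes}(\hat\phi(x))=-\log p^{\otimes}(x)+o(N^d)$ for all but an exponentially rare set of $x$, i.e.\ $I_q(\hat Y)=I_p(\hat X)+o(N^d)$ with probability $1-o(1)$; combined with the second paragraph's reduction one wants this comparison to persist after mildly tilting the input, for then on the event $\{\tfrac1{N^d}I_p(\hat X)\approx a\}$ --- whose conditional law is, to subexponential order, the tilted i.i.d.\ law $p^{(t_a)\otimes}$ --- one gets $\tfrac1{N^d}I_q(\hat Y)\approx a$ as well, whence $\Lambda_q^\ast\le\Lambda_p^\ast$ near $h$, and running the (approximate) inverse of $\hat\phi$ gives the reverse inequality. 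Then $\Lambda_p^\ast=\Lambda_q^\ast$ near $h$, so $\Lambda_p=\Lambda_q$ near $0$, and the first paragraph concludes that $p=q$ up to permutation.

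The main obstacle is exactly the step that lets the comparison reach the exponentially rare events: the entropy bookkeeping shows $\hat\phi$ is an approximate measure isomorphism $p^{\otimes}\to q^{\otimes}$ with defect $o(N^d)$ --- in fact of order $N^{d-1}\,\mathrm{polylog}\,N$, reflecting that every discrepancy introduced by wrapping $\phi$ onto the torus lives near a codimension-one set --- but the un-tilted comparison $I_q(\hat Y)=I_p(\hat X)+o(N^d)$ only holds with probability $1-o(1)$, which is far too coarse to survive conditioning on an event of probability $e^{-\Theta(N^d)}$. One therefore needs the faithfulness of the finite model, and the resulting isomorphism-up-to-$o(N^d)$ property of $\hat\phi$, to be uniform over the exponential tilts $p^{(t)}$ for $t$ in a neighbourhood of $0$ --- the delicate point being that the $\hat\phi$-image of an i.i.d.-$p^{(t)}$ field is \emph{not} an i.i.d.\ field, so the divergence and invertibility estimates have to be re-derived in tilted form. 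Establishing this tilt-uniform version of ``the finite model sees what the infinite process sees'' is where the real work lies; once it is in hand, the passage to $\Lambda_p=\Lambda_q$ and then to equality of $p$ and $q$ up to permutation is routine.
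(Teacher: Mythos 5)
Your reduction at the two ends is fine: $\Lambda_p\equiv\Lambda_q$ on a neighbourhood of $0$ does force $p=q$ up to permutation, and identifying $\Lambda_p^\ast,\Lambda_q^\ast$ near $h$ would suffice. But the middle of the argument has a genuine gap, and it is the one you yourself flag: everything you actually derive from the hypothesis lives at precision $e^{-\Omega(N)}$ (closeness of finite-dimensional marginals of $\hat Y$ to $q^{\otimes}$, the coupling defect) or at precision $o(N^{d})$ with probability $1-o(1)$ (the entropy/Fano bookkeeping), whereas the large-deviation events you need to compare have probability $e^{-\Theta(N^{d})}$. For $d\geq 2$ an error of size $e^{-\Omega(N)}$ is astronomically larger than the probabilities being estimated, so the claimed LDP for $\tfrac1{N^d}I_q(\hat Y)$ with rate $\Lambda_q^\ast$ does not follow from marginal closeness at all ($\hat Y$ is a deterministic, highly dependent functional of $\hat X$, and rare-event probabilities are not controlled by total-variation closeness of local marginals); even for $d=1$ the two exponential scales are comparable and the rate function could be distorted. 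The ``tilt-uniform faithfulness'' you defer is not a technical refinement but the entire difficulty, and it is not clear it can be established: the image of a tilted i.i.d.\ input under $\hat\phi$ has no useful product structure, and the reverse inequality via ``the approximate inverse of $\hat\phi$'' is additionally suspect because $\phi$ is only assumed to be a factor map, so the inverse exists only in the weak entropic sense with $o(1)$ error probability, again far short of exponential precision. As written, the proposal is a program, not a proof.

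For contrast, the paper's proof never touches rare events. Its engine is the pointwise inequality of Lemma \ref{lem:For-any-,}, which gives almost surely $\sum_{u}I_p(\hat X_u)\geq\sum_{u:\hat Y_u\in B}I_q(\hat Y_u)$, while the exponential tails make the two expectations agree up to $e^{-\Omega(N)}$; a nonnegative variable with exponentially small mean and polynomially bounded sup-norm has exponentially small $L^k$ norms (Proposition \ref{lem:Suppose-that-for}), so all $k$-th moments of the two sums coincide up to $e^{-\Omega_k(N)}$. Combined with the coupling of Proposition \ref{prop:}\ref{enu:Suppose--is} and the torus-geometry Proposition \ref{lem:For-any-,-1}, an induction on $k$ then matches every moment of $I_p(X_0)$ and $I_q(Y_0)$, which are bounded, hence equal in law. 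If you want to salvage your route, you would need to prove your deferred lemma in a form quantitatively stronger than anything the $e^{-\Omega(N)}$ model error can give; the moment route shows this can be bypassed entirely.
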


\begin{acknowledgement*}
This paper is part of the author\textquoteright s Ph.D. thesis, conducted
under the guidance of Professor Michael Hochman, whom I would like
to thank for all his support and advice.
\end{acknowledgement*}

\section{\protect\label{sec:The--model}The $\mathbb{T}_{N}^{d}$-model}

For a positive integer $N$, write $\mathbb{T}_{N}^{d}:=\mathbb{Z}^{d}/(N\mathbb{Z})^{d}$
for the ``integral'' $d$-dimensional torus of size $N^{d}$. Let
$X\sim(A^{\mathbb{Z}^{d}},\mu)$ and $Y\sim(B^{\mathbb{Z}^{d}},\nu)$
be two $\mathbb{Z}^{d}$-indexed i.i.d. processes with marginal measures
$p$ and $q$ respectively, and suppose there is a finitary factor
map $\phi:X\longmapsto Y$. We introduce a $\mathbb{T}_{N}^{d}$-model
for this homomorphism, which consists of a $\mathbb{T}_{N}^{d}$-indexed
i.i.d. process $\hat{X}$ and a factor map $\hat{\phi}^{n}:\hat{X}\longmapsto\hat{\phi}^{n}(\hat{X})$,
approximating the behavior of $X$ and its factor map $\phi$. Denoting
by $R_{\phi}$ the coding radius of $\phi$, for any $n\in\mathbb{N}$
and $\mu$-a.e. $x\in A^{\mathbb{Z}^{d}}$, we define 
\[
\phi_{0}^{n}(x)=\begin{cases}
\left(\phi(x)\right)_{0} & R_{\phi}(x)\leq n\\*
* & \text{Otherwise}
\end{cases}
\]
and extend it to a map $\phi^{n}:A^{\mathbb{Z}^{d}}\rightarrow(B\cup\{*\})^{\mathbb{Z}^{d}}$
by letting $\left(\phi^{n}(x)\right)_{u}=\phi_{0}^{n}(T_{-u}x)$ for
any $u\in\mathbb{Z}^{d}$ (where $T_{-u}$ denotes translation by
$-u$, defined by $T_{-u}x(v)=x(v+u)$). Clearly, the coding radius
of $\phi_{0}^{n}$ is bounded by $n$, thus one can define in the
same manner the map $\hat{\phi}^{n}:A^{\mathbb{T}_{N}^{d}}\rightarrow(B\cup\{*\})^{\mathbb{T}_{N}^{d}}$
for any $N\geq2n+1$: Given $\hat{x}\in A^{\mathbb{T}_{N}^{d}}$ and
$u\in\mathbb{Z}^{d}$, abbreviate $\hat{x}_{u}$ for $\hat{x}_{u\,\mod{\,}N}$,
and denote by $[\hat{x}]\subseteq A^{\mathbb{Z}^{d}}$ the cylinder
set
\[
[\hat{x}]=\left\{ x\in A^{\mathbb{Z}^{d}}:(x_{u}=\hat{x}_{u})_{u\in[1,N]^{d}}\right\} .
\]

For any $\hat{x}\in A^{\mathbb{T}_{N}^{d}}$ there exists a unique
value in $B\cup\{*\}$ which $\phi^{n}(x)_{0}$ is equal to for a.e.
$x\in[\hat{x}]$. Define $\left(\hat{\phi}^{n}(\hat{x})\right)_{0}$
to be that value, and for any $u\in\mathbb{T}_{N}^{d}$, let $\left(\hat{\phi}^{n}(\hat{x})\right)_{u}=\left(\phi^{n}(T_{-u}\hat{x})\right)_{0}$
(here $T_{-u}$ is the translation by $u$ operator on $A^{\mathbb{T}_{N}^{d}}$,
defined by $T_{-u}x(v)=x(v+u)$). 

We now can define the $(\mathbb{T}_{N}^{d},n)$-modeling of $\phi$:
\begin{defn}
\label{def:Letting--and}Letting $\hat{X}\sim(A^{\mathbb{T}_{N}^{d}},p^{\mathbb{T}_{N}^{d}})$
and $\hat{Y}=\hat{\phi}^{n}(\hat{X})$,\textit{ the $(\mathbb{T}_{N}^{d},n)$-modeling
of $\phi$} is the homomorphism
\[
\hat{\phi}^{n}:\hat{X}\longmapsto\hat{Y}.
\]
\end{defn}

Denote by $\mathcal{J}=\mathcal{J}_{N,n}\subset\mathbb{T}_{N}^{d}$
the random set of all indices $j$ for which $\hat{\phi}^{n}(\hat{X})_{j}\in B$,
and by $\mathcal{J}^{c}$ the set of indices $j$ for which $\hat{\phi}^{n}(\hat{X})_{j}=*$.
The following proposition summarizes several conclusions whose proofs
are immediate from the above definitions:
\begin{prop}
\label{prop:}
\begin{enumerate}
\item \label{enu:For-any-,}For any $u\in\mathbb{T}_{N}^{d}$, 
\[
\mathbf{P}[\hat{\phi}^{n}(\hat{X})_{u}=*]=\mathbf{P}[R_{\phi}>n]
\]
and
\[
\mathbf{E}[|\mathcal{J}_{N,n}^{c}|]=N^{d}\mathbf{P}[R_{\phi}>n].
\]
In particular, if $\mathbf{P}[R_{\phi}>n]=o(n^{-d\alpha})$ for some
$\alpha>0$, one can find a sequence $\delta_{N}\searrow0$ so that
\begin{equation}
\mathbf{E}[|J_{N,\lfloor\delta_{N}N\rfloor}^{c}|]=o(N^{d(1-\alpha)}).\label{eq:-2}
\end{equation}
\item \label{enu:Suppose--is}Given $N,n$, suppose $\mathcal{S}\subset\mathbb{Z}^{d}$
is a set of elements that satisfies 
\[
B(\mathcal{S},n):=\bigcup_{s\in\mathcal{S}}[s-n,s+n]^{d}\subseteq u_{0}+[1,N]^{d}
\]
 for some $u_{0}\in\mathbb{Z}^{d}$. Then there exists a coupling
$\lambda_{\mathcal{S}}$ of $\left(\hat{\phi}^{n}(\hat{X})_{u}\right)_{u\in\mathcal{S}}$
with $\left(Y_{u}\right)_{u\in\mathcal{S}}\sim q^{\mathcal{S}}$ s.t.
for any $u\in\mathcal{S}$,
\[
\{\hat{\phi}^{n}(\hat{X})_{u}\neq Y_{u}\}=\{\hat{\phi}^{n}(\hat{X})_{u}=*\}.
\]
\end{enumerate}
\end{prop}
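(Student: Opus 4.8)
The plan is to read off both parts directly from the construction of $\hat\phi^n$. The single mechanism at work is this: since $\phi_0^n$ has coding radius at most $n$, the symbol $\hat\phi^n(\hat x)_u$ is produced by a local rule that inspects $\hat x$ only on the window $u+[-n,n]^d$ (indices read mod $N$), and whenever such a window injects into one copy of $[1,N]^d$ inside $\mathbb{Z}^d$ it sees the same i.i.d.\ $p$-distributed symbols as the corresponding window of the genuine $\mathbb{Z}^d$-process $X$; running the rule cyclically on $\hat X$ is then indistinguishable from running $\phi_0^n$ on $X$.

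For part (i): because $N\ge 2n+1$, the cyclic window $u+[-n,n]^d\subset\mathbb{T}_N^d$ consists of $(2n+1)^d$ distinct sites, so $(\hat X_v)_{v\in u+[-n,n]^d}$ is i.i.d.\ $p$, hence distributed like $(X_v)_{v\in[-n,n]^d}$; applying the local rule of $\phi_0^n$ to each gives $\hat\phi^n(\hat X)_u\stackrel{d}{=}\phi_0^n(X)$. By the definition of $\phi_0^n$, $\{\phi_0^n(X)=*\}=\{R_\phi(X)>n\}$, so $\mathbf{P}[\hat\phi^n(\hat X)_u=*]=\mathbf{P}[R_\phi>n]$, and summing over the $N^d$ sites of $\mathbb{T}_N^d$ yields $\mathbf{E}[|\mathcal{J}_{N,n}^c|]=N^d\,\mathbf{P}[R_\phi>n]$. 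For the last assertion, put $f(n)=\mathbf{P}[R_\phi>n]$ and $h(m)=\sup_{k\ge m}k^{d\alpha}f(k)$; the hypothesis $f(n)=o(n^{-d\alpha})$ says $h$ is non-increasing with $h(m)\to 0$. I would then choose a non-increasing $\delta_N\searrow 0$ decaying slowly enough that $\delta_N N\to\infty$ while $\delta_N^{-d\alpha}h(\lfloor\delta_N N/2\rfloor)\to 0$ --- concretely, on a block of $N$'s on which $h$ has fallen below $2^{-j}$ one takes $\delta_N$ of order $2^{-j/(2d\alpha)}$, so that $\delta_N^{-d\alpha}=2^{j/2}$ is beaten by $h(\lfloor\delta_N N/2\rfloor)\le 2^{-j}$. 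Since $\lfloor\delta_N N\rfloor\ge\delta_N N/2$ once $\delta_N N\ge 2$ and $f(m)\le h(m)m^{-d\alpha}$, the previous display gives $\mathbf{E}[|\mathcal{J}_{N,\lfloor\delta_N N\rfloor}^c|]=N^df(\lfloor\delta_N N\rfloor)\le 2^{d\alpha}\delta_N^{-d\alpha}h(\lfloor\delta_N N/2\rfloor)\,N^{d(1-\alpha)}=o(N^{d(1-\alpha)})$.

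For part (ii): let $(X_v)_{v\in\mathbb{Z}^d}\sim\mu$ and set $Y:=\phi(X)\sim\nu$, so $(Y_u)_{u\in\mathcal{S}}\sim q^{\mathcal{S}}$. Define the coupling by declaring $\hat X_{w\bmod N}:=X_w$ for $w\in u_0+[1,N]^d$; since the $X_w$ are i.i.d.\ $p$, this $\hat X$ has law $p^{\mathbb{T}_N^d}$, and $\hat\phi^n(\hat X)$ and $Y$ are then coupled as functions of $X$. Fix $u\in\mathcal{S}$: its window $u+[-n,n]^d$ lies in $B(\mathcal{S},n)\subseteq u_0+[1,N]^d$, on which $\mathbb{Z}^d\to\mathbb{T}_N^d$ is injective, so the local rule producing $\hat\phi^n(\hat X)_u$ reads exactly $(\hat X_{w\bmod N})_{w\in u+[-n,n]^d}=(X_w)_{w\in u+[-n,n]^d}$, whence $\hat\phi^n(\hat X)_u=\phi_0^n(T_{-u}X)$. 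By shift-equivariance of $\phi$ this equals $\phi(X)_u=Y_u$ exactly when $R_\phi(T_{-u}X)\le n$ and equals $*$ otherwise; that is, $\{\hat\phi^n(\hat X)_u\ne Y_u\}=\{\hat\phi^n(\hat X)_u=*\}$, as required.

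I do not expect a genuine obstacle here --- the proposition is essentially a bookkeeping consequence of the definitions. The one point needing care in both parts is the ``no wrap-around'' condition: in (i) it is $N\ge 2n+1$ and in (ii) it is exactly the hypothesis $B(\mathcal{S},n)\subseteq u_0+[1,N]^d$. This is what legitimizes transferring a distributional identity (part (i)) and an honest coupling (part (ii)) between the torus model and the original process --- without it, radius-$n$ windows could self-overlap modulo $N$ and the i.i.d.\ structure on those sites, on which everything rests, would fail. A secondary and entirely routine point is the diagonal construction of $\delta_N$ in (i), which only uses that a null sequence can be made to tend to $0$ as slowly as one wishes.
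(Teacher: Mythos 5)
Your proof is correct and is exactly the argument the paper has in mind: the paper offers no written proof (it declares the conclusions ``immediate from the above definitions''), and your verification --- locality of $\phi_0^n$ within radius-$n$ windows, the no-wrap-around conditions $N\ge 2n+1$ resp.\ $B(\mathcal{S},n)\subseteq u_0+[1,N]^d$, the coupling obtained by reading $\hat X$ off $X$ on $u_0+[1,N]^d$, and the routine slow-decay choice of $\delta_N$ --- is precisely that intended bookkeeping.
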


The next lemma points out an important connection between the original
factor $Y=\phi(X)$, whose law is denoted by $\nu$, and the new factor
$\hat{\phi}^{n}(\hat{X})$, whose law will be denoted by $\hat{\nu}:=\hat{\phi}_{*}^{n}p^{\mathbb{T}_{N}^{d}}$.
For any $\hat{y}\in(B\cup\{*\})^{\mathbb{T}_{N}^{d}}$, write
\[
[\hat{y}]=\left\{ y\in B^{\mathbb{Z}^{d}}:\forall u\in[1,N]^{d},\;\hat{y}_{u}\in\{*,y_{u}\}\right\} .
\]
For any $\hat{x}\in A^{\mathbb{T}_{N}^{d}}$, it is clear that $\mu([\hat{x}])$
is bounded by the $\hat{\nu}$-measure of its image under $\hat{\phi}^{n}$:
\[
\mu([\hat{x}])=p^{\mathbb{T}_{N}^{d}}(\hat{x})\leq p^{\mathbb{T}_{N}^{d}}\left(\left(\hat{\phi}^{n}\right)^{-1}(\hat{\phi}^{n}(\hat{x}))\right)=\hat{\nu}(\hat{\phi}^{n}(\hat{x})).
\]
Curiously, the quantity $\mu([\hat{x}])$ is actually bounded by the
$\nu$-measure of $[\hat{\phi}^{n}(\hat{x})]$, the set of all points
$y\in B^{\mathbb{Z}^{d}}$ that agree with $\hat{\phi}^{n}(\hat{x})$
on $u\in[1,N]^{d}$ whenever $\hat{\phi}^{n}(\hat{x})_{u}\in B$.
\begin{lem}
\label{lem:For-any-,}For any $\hat{x}\in A^{\mathbb{T}_{N}^{d}}$,
\[
\mu([\hat{x}])\leq\nu([\hat{\phi}^{n}(\hat{x})]).
\]
\end{lem}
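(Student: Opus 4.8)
The plan is to compare, for a fixed torus coordinate $\hat{x}\in A^{\mathbb{T}_N^d}$, the $\mu$-mass of the cylinder $[\hat x]$ against the $\nu$-mass of $[\hat\phi^n(\hat x)]$ by unfolding the torus back into $\mathbb{Z}^d$ in a way that respects both the i.i.d.\ product structure and the local nature of $\phi^n$. Concretely, I would tile $\mathbb{Z}^d$ by disjoint translated copies of the fundamental domain $[1,N]^d$, say $Q_k = kN + [1,N]^d$ for $k\in\mathbb{Z}^d$, and let $\bar x$ denote the $N$-periodic point of $A^{\mathbb{Z}^d}$ whose restriction to each $Q_k$ equals $\hat x$. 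The key observation is that because the coding radius of $\phi^n$ is at most $n$ and $N\ge 2n+1$, the value $(\phi^n(\bar x))_u$ for $u$ in (most of) a given copy $Q_k$ depends only on symbols of $\bar x$ inside $Q_k$, hence equals $(\hat\phi^n(\hat x))_{u \bmod N}$ by the very definition of $\hat\phi^n$; so $\phi^n(\bar x)$ is itself $N$-periodic and agrees with $\hat\phi^n(\hat x)$ on every tile.

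Next I would set up the coupling. Fix a large box $\Lambda_M = \bigcup_{k\in[1,M]^d} Q_k$, a union of $M^d$ tiles. Sample $X$ restricted to $\Lambda_M$ from $\mu$; the event $E_M = \{ X_u = \bar x_u \text{ for all } u\in\Lambda_M\}$ has probability exactly $p^{\mathbb{T}_N^d}(\hat x)^{M^d} = \mu([\hat x])^{M^d}$. On $E_M$, for every tile $Q_k$ that is not adjacent to the boundary of $\Lambda_M$, the coding radius bound gives $(\phi^n(X))_u = (\phi^n(\bar x))_u = (\hat\phi^n(\hat x))_{u\bmod N}$ for all $u\in Q_k$; in particular, on each such interior tile, $\phi(X)$ agrees with $\hat\phi^n(\hat x)$ wherever the latter lies in $B$ — that is, the restriction of $\phi(X) = Y$ to $Q_k$ lies in the "tile version" of $[\hat\phi^n(\hat x)]$. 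Since $Y\sim\nu$ is an i.i.d.\ product and these interior tiles are disjoint, the $\nu$-probability of this joint event factorizes as $\nu([\hat\phi^n(\hat x)])^{(M-2)^d}$ (or a similar count of interior tiles). Comparing the two: $\mu([\hat x])^{M^d} = \mathbf{P}[E_M] \le \mathbf{P}[Y|_{Q_k}\in[\hat\phi^n(\hat x)]\text{ for all interior }k] = \nu([\hat\phi^n(\hat x)])^{(M-O(M^{d-1}))}$.

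Finally I would take roots and let $M\to\infty$. Raising both sides to the power $1/M^d$ gives $\mu([\hat x]) \le \nu([\hat\phi^n(\hat x)])^{(M^d - O(M^{d-1}))/M^d}$, and the exponent tends to $1$ as $M\to\infty$, yielding $\mu([\hat x])\le\nu([\hat\phi^n(\hat x)])$. The main obstacle — and the step requiring the most care — is the bookkeeping around the boundary tiles: one must check that the periodic extension $\bar x$ genuinely makes $\phi^n(\bar x)$ periodic and equal to $\hat\phi^n(\hat x)$ on interior tiles (this is where $N\ge 2n+1$ and the radius-$n$ locality are essential), and that the factorization over interior tiles is legitimate despite the tiles sharing no sites (true since $Y$ is i.i.d., so disjoint regions are independent). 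An alternative, slicker route avoiding the limit is to note directly that $\mu([\hat x]) = p^{\mathbb{T}_N^d}(\hat x) = \mathbf{P}[X|_{[1,N]^d} = \hat x]$ and that this event forces $(\hat\phi^n(\hat x))$ to be the image, then transport to $\nu$ via Proposition~\ref{prop:}(\ref{enu:Suppose--is}) applied with $\mathcal{S}$ a suitable sub-box — but handling all of $[1,N]^d$ at once runs into exactly the wrap-around issue that the periodization argument is designed to absorb, so I expect the tiling-and-limit argument to be the cleanest.
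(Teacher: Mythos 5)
Your proposal is correct and follows essentially the same argument as the paper: periodically extend $\hat{x}$ over a large box of $m^d$ tiles, note that on the corresponding cylinder the image under $\phi$ must agree with the periodic extension of $\hat{\phi}^n(\hat{x})$ on the $(m-2)^d$ interior tiles wherever it lies in $B$, use measure preservation and the i.i.d.\ product structure to compare $\mu([\hat{x}])^{m^d}$ with $\nu([\hat{\phi}^n(\hat{x})])^{(m-2)^d}$, then take roots and let $m\to\infty$. Aside from the minor typo in the exponent ($M^d-O(M^{d-1})$, not $M-O(M^{d-1})$), nothing needs changing.
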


\begin{proof}
Fix $m\in\mathbb{N}$ and define $w\in A^{[1,mN]^{d}}$ and $v\in(B\cup\{*\})^{[N+1,(m-1)N]^{d}}$
by 
\begin{align*}
w_{u} & =\hat{x}_{u}\hfill(\forall u\in[1,mN]^{d})\\
v_{u} & =\hat{\phi}^{n}(\hat{x})_{u}\qquad(\forall u\in[N+1,(m-1)N]^{d})
\end{align*}

We further denote by $[w]\subset A^{\mathbb{Z}^{d}}$ the cylinder
set of those points that agree with $w$ on all indices $u\in[1,mN]^{d}$.
Similarly, denote by $[v]\subset B^{\mathbb{Z}^{d}}$ the cylinder
set of those points that agree with $v$ on all indices $u\in[N+1,(m-1)N]^{d}$
that satisfy $v_{u}\in B$. One has 
\begin{align}
\mu([w]) & =\left(\mu([\hat{x}])\right)^{m^{d}}\label{eq:}\\
\nu([v]) & =\left(\nu([\hat{\phi}^{n}(\hat{x})])\right)^{(m-2)^{d}}\label{eq:-1}
\end{align}
and for any $x\in[w]$, its image satisfies $\phi(x)\in[v]$. As $\phi$
is measure preserving, this implies that
\[
\mu([w])\leq\mu(\phi^{-1}[v])=\nu([v])
\]
Using the above identities (\ref{eq:}), (\ref{eq:-1}) and taking
the $m^{d}$th root of both sides gives
\[
\left(\mu([\hat{x}])\right)\leq\left(\nu([\hat{\phi}^{n}(\hat{x})])\right)^{\left(\frac{m-2}{m}\right)^{d}}
\]
and taking $m\rightarrow\infty$ implies the claim.
\end{proof}
The next result is standard. It translates the finite moments property
of a random variable into a tail bound, which has a more geometric
interpretation.
\begin{prop}
\label{prop:Let--be}Let $\phi:X\rightarrow Y$ be a finitary map
between some processes $X,Y$, and suppose that $\mathbf{E}[R^{\alpha}]<\infty$
for some $\alpha>0$. We have then
\[
\mathbf{P}[R_{\phi}>n]=o(n^{-\alpha}).
\]
\end{prop}

\begin{proof}
The $\alpha$-moment of $R_{\phi}$ can be written as
\begin{align*}
\mathbf{E}[R_{\phi}^{\alpha}] & =\int_{0}^{\infty}\mathbf{P}[R_{\phi}^{\alpha}(x)\geq t]dt\\
 & =\alpha\int_{0}^{\infty}s^{\alpha-1}\mathbf{P}[R_{\phi}\geq s]ds\\
 & =\Omega\left(\sum_{n=1}^{\infty}n^{\alpha-1}\mathbf{P}[R_{\phi}>n]\right)
\end{align*}
thus by assumption, the last series converges and finite. Consequently,
along a set $\mathcal{N}\subset\mathbb{N}$ of density 1, the sequence
$a_{n}=n^{\alpha-1}\mathbf{P}[R_{\phi}>n]$ converges to zero faster
than $\frac{1}{n}$, i.e.
\[
\mathbf{P}[R_{\phi}>n]=a_{n}n^{1-\alpha}\underset{n\in\mathcal{N}}{=}o(n^{-\alpha}).
\]
Since $\mathcal{N}$ is of density 1, for any large enough $n$ there
is some $k\in\mathcal{N}\cap[\frac{n}{2},n]$, thus
\[
\mathbf{P}[R_{\phi}>n]\leq\mathbf{P}[R_{\phi}>k]=o((n/2)^{-\alpha})=o(n^{-\alpha})
\]
which yields the claim.
\end{proof}
The last proposition together with Proposition \ref{prop:} ensures
the following corollary:
\begin{cor}
\label{cor:Suppose-that-.}Suppose that $\mathbf{E}[R_{\phi}^{d/2}]<\infty$.
There is a sequence $\delta_{N}\searrow0$ so that 
\[
\mathbf{E}[|J_{N,\lfloor\delta_{N}N\rfloor}^{c}|]=o(N^{d/2}).
\]
\end{cor}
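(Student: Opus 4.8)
The plan is to obtain the statement by directly chaining the two immediately preceding results, Proposition~\ref{prop:Let--be} and Proposition~\ref{prop:}(i), with the exponents chosen so that they fit together.

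First I would apply Proposition~\ref{prop:Let--be} with $\alpha=d/2$. Its hypothesis is exactly $\mathbf{E}[R_\phi^{d/2}]<\infty$, which we are given, so it produces the tail bound
\[
\mathbf{P}[R_\phi>n]=o(n^{-d/2}).
\]
Next I would invoke Proposition~\ref{prop:}(i) with its free parameter set to $\alpha=1/2$. That statement asserts that whenever $\mathbf{P}[R_\phi>n]=o(n^{-d\alpha})$ for some $\alpha>0$, there is a sequence $\delta_N\searrow0$ with $\mathbf{E}[|J_{N,\lfloor\delta_N N\rfloor}^{c}|]=o(N^{d(1-\alpha)})$. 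With $\alpha=1/2$ the required hypothesis is precisely $\mathbf{P}[R_\phi>n]=o(n^{-d/2})$, which we have just established, and the conclusion becomes $\mathbf{E}[|J_{N,\lfloor\delta_N N\rfloor}^{c}|]=o(N^{d/2})$, which is the corollary.

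If one prefers to re-derive the choice of $\delta_N$ rather than cite the construction inside Proposition~\ref{prop:}(i), the argument is routine: writing $\varepsilon_m:=m^{d/2}\mathbf{P}[R_\phi>m]\to0$ and using the identity $\mathbf{E}[|\mathcal{J}_{N,n}^{c}|]=N^{d}\mathbf{P}[R_\phi>n]$, one gets for $n=\lfloor\delta_N N\rfloor$ (assuming $\delta_N N\to\infty$) the bound
\[
\mathbf{E}[|\mathcal{J}_{N,n}^{c}|]\le 2^{d/2}\,\delta_N^{-d/2}\,\varepsilon_{\lfloor\delta_N N\rfloor}\,N^{d/2},
\]
so it suffices to let $\delta_N\searrow0$ slowly enough that $\varepsilon_{\lfloor\delta_N N\rfloor}=o(\delta_N^{d/2})$, which a standard diagonal choice supplies since $\varepsilon_m\to0$.

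There is no real obstacle here: the substantive content — the moment-to-tail conversion in Proposition~\ref{prop:Let--be} and the $\mathbb{T}_N^d$-model estimate on $\mathbf{E}[|\mathcal{J}^{c}|]$ in Proposition~\ref{prop:}(i) — has already been carried out. The only point requiring care is bookkeeping, namely that the symbol $\alpha$ in Proposition~\ref{prop:}(i) must be taken to be $1/2$, not $d/2$, so that $d\alpha=d/2$ matches the decay rate of the tail and simultaneously $d(1-\alpha)=d/2$ gives the exponent claimed for $\mathbf{E}[|J^{c}|]$.
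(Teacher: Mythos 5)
Your proposal is correct and is exactly the paper's (implicit) argument: the corollary is stated as an immediate consequence of Proposition~\ref{prop:Let--be} applied with $\alpha=d/2$ and Proposition~\ref{prop:}(i) applied with its parameter equal to $1/2$, which is precisely the chaining and exponent bookkeeping you carry out. Your optional re-derivation of the choice of $\delta_N$ is also fine and matches the construction already contained in Proposition~\ref{prop:}(i).
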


\section{Finite $\frac{d}{2}$-moment}

In this section we will prove Theorem \ref{thm:For-any-i.i.d.-1}.

We denote by $h$ the entropy of $X$, the i.i.d. process in the domain
of $\phi$ in our consideration. That is,
\[
h=h(p)=-\sum_{i=1}^{r}p_{i}\log(p_{i})
\]
where $p=(p_{1},..,p_{r})$ is the marginal measure of $X$. 

In what follows, we will use the notation $c^{+}:=c\vee0$ for any
real $c$. 
\begin{prop}
\label{prop:The-following-holds:}Suppose that $\mathbf{E}[R_{\phi}^{d/2}]<\infty$
and let $\delta_{N}\searrow0$ be as in corollary \ref{cor:Suppose-that-.}.
For each large enough $N\in\mathbb{N},$ consider the $(\mathbb{T}_{N}^{d},\lfloor\delta_{N}N\rfloor)$-modeling
of $\phi$. The following holds:
\begin{enumerate}
\item \label{enu:}$\frac{\mathbf{E}[|\mathcal{J}_{N,\lfloor\delta_{N}N\rfloor}^{c}|]}{\sqrt{|\mathbb{T}_{N}^{d}|}}=o(1).$
\item \label{enu:-1}$\mathbf{E}\left[\frac{1}{\sqrt{|\mathbb{T}_{N}^{d}|}}{\displaystyle \sum_{u\in\mathbb{T}_{N}^{d}}}(h+\log p(\hat{X}_{u}))\right]^{+}=\frac{\text{Var}(\log p(X_{0}))}{\sqrt{2\pi}}\pm o(1)$
\item \label{enu:-2}${\displaystyle \mathbf{E}\left[\frac{1}{\sqrt{|\mathbb{T}_{N}^{d}|}}\sum_{\left\{ u\in\mathbb{T}_{N}^{d}:\hat{Y}_{u}\in B\right\} }(h+\log q(\hat{Y}_{u}))\right]^{+}}=\frac{\text{Var}(\log q(Y_{0}))}{\sqrt{2\pi}}\pm o(1)$
\end{enumerate}
\end{prop}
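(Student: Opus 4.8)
Part (i) is immediate: it is just a restatement of Corollary \ref{cor:Suppose-that-.}, since $|\mathbb{T}_N^d| = N^d$ and hence $\sqrt{|\mathbb{T}_N^d|} = N^{d/2}$, while $\mathbf{E}[|\mathcal{J}_{N,\lfloor\delta_N N\rfloor}^c|] = o(N^{d/2})$. So the content is in (ii) and (iii), and these two are proved by the same argument applied to the two processes, so I will focus on (ii).

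The plan for (ii) is a central limit theorem computation. Write $S_N = \sum_{u\in\mathbb{T}_N^d}(h+\log p(\hat X_u))$. Since $\hat X \sim p^{\mathbb{T}_N^d}$ is i.i.d.\ over the $N^d$ coordinates of the torus, $S_N$ is a sum of $N^d$ i.i.d.\ mean-zero random variables: indeed $\mathbf{E}[h + \log p(X_0)] = h - h = 0$ by definition of $h$, and each summand has variance $\sigma^2 := \mathrm{Var}(\log p(X_0))$, which is finite (it is a finite sum). By the classical CLT, $S_N / \sqrt{N^d} \Rightarrow \mathcal{N}(0,\sigma^2)$ in distribution. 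Then I would invoke the standard fact that for a mean-zero normal variable $Z\sim\mathcal N(0,\sigma^2)$ one has $\mathbf{E}[Z^+] = \sigma/\sqrt{2\pi}$; combined with convergence of the positive-part expectations this gives $\mathbf{E}[(S_N/\sqrt{N^d})^+] \to \sigma/\sqrt{2\pi}$. Note the formula in the statement writes $\mathrm{Var}(\log p(X_0))$ in the numerator, not $\sqrt{\mathrm{Var}(\log p(X_0))}$; I would double-check this against the intended normalization, but assuming the paper's convention is $\sigma := \mathrm{Var}(\log p(X_0))$ playing the role of the standard deviation (or a harmless relabeling), the computation is as above. For (iii) the only change is that the sum ranges over $\{u : \hat Y_u \in B\} = \mathcal{J}_{N,\lfloor\delta_N N\rfloor}$, i.e.\ we drop the $o(N^{d/2})$ many starred coordinates; since each dropped summand $h + \log q(\hat Y_u)$ is bounded (finitely many symbols), removing them changes $S_N/\sqrt{N^d}$ by at most $C\,|\mathcal{J}^c|/\sqrt{N^d} = o(1)$ in $L^1$ by part (i), so the positive-part expectation is unaffected in the limit, and I then apply the same CLT to $\sum_{u}(h+\log q(\hat Y_u))$ — using that $\hat Y_u = \hat\phi^n(\hat X)_u$ has marginal law $q$ on the event $\hat Y_u \in B$...

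Here is the subtlety that I expect to be the main obstacle. For (iii), $\hat Y = \hat\phi^n(\hat X)$ is \emph{not} an i.i.d.\ process: the coordinates $\hat Y_u$ are correlated through the coding map. So I cannot directly apply the i.i.d.\ CLT to $\sum_u (h + \log q(\hat Y_u))$. The right move is to use Proposition \ref{prop:}(\ref{enu:Suppose--is}) or a similar coupling: the marginal of each $\hat Y_u$ conditioned on being in $B$ agrees with $q$, so $\mathbf{E}[h + \log q(\hat Y_u)\,;\,\hat Y_u\in B]$ can be controlled, but the variance and the fluctuations require more. Rather than a CLT for $\hat Y$, I suspect the intended argument is to transfer to $\hat X$: by the construction of $\hat\phi^n$, each $\hat Y_u$ (when in $B$) equals $\phi(x)_0$ for $x \in [\hat x]$, hence $q(\hat Y_u) = q(\phi(x)_0)$ is a function of a bounded window of $\hat X$ around $u$, so $\sum_u(h + \log q(\hat Y_u))$ is a sum over the torus of a local function of $\hat X$ with window radius $\lfloor\delta_N N\rfloor = o(N)$. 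One then needs a CLT for such $m$-dependent (or weakly dependent, with growing but sublinear range) stationary fields on $\mathbb{T}_N^d$ — this is where care is needed, since the dependence range grows with $N$. Alternatively, and more likely what the paper does, one pushes the whole estimate through Lemma \ref{lem:For-any-,} rather than a direct CLT on $\hat Y$: Lemma \ref{lem:For-any-,} relates $\mu([\hat x])$ to $\nu([\hat\phi^n(\hat x)])$, i.e.\ $\sum_u \log p(\hat X_u) = \log\mu([\hat X])$ against $\log\nu([\hat\phi^n(\hat X)]) \approx \sum_{u\in\mathcal J}\log q(\hat Y_u)$ up to the $\mathcal J^c$ correction, and the CLT is then applied only on the $\hat X$ side where independence genuinely holds, with (iii) obtained from (ii) by this inequality plus a matching lower bound; so the real work in (iii) is justifying that $\log\nu([\hat\phi^n(\hat X)])$ is, up to $o(N^{d/2})$ error, equal to $\sum_{u\in\mathcal J}\log q(\hat Y_u)$, using that $\nu = q^{\mathbb Z^d}$ is a product measure and that $[\hat\phi^n(\hat X)]$ only constrains the $|\mathcal J|$ coordinates in $\mathcal J$.
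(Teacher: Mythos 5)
Parts (i) and (ii) are fine and match the paper: (i) is Corollary \ref{cor:Suppose-that-.} verbatim, and (ii) is exactly the paper's CLT-plus-uniform-integrability argument (the $\mathrm{Var}$ versus standard-deviation normalization you flag is present in the paper itself and is harmless, since the final theorem only needs equality of the two quantities).

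The gap is in (iii). You correctly identify the obstacle (the field $\hat Y$ is not i.i.d.), but neither of your proposed resolutions closes it. Route (a), a CLT for a dependent field whose dependence radius $\lfloor\delta_N N\rfloor$ grows with $N$, is left entirely unproved and is not needed. Route (b), deriving (iii) from (ii) via Lemma \ref{lem:For-any-,}, cannot work: that lemma yields only the one-sided inequality $\sum_u \log p(\hat X_u)\leq \sum_{u\in\mathcal J}\log q(\hat Y_u)$ and carries no distributional information about the $q$-side sum, so it cannot produce the value $\mathrm{Var}(\log q(Y_0))/\sqrt{2\pi}$; the ``matching lower bound'' you gesture at is precisely the missing content, and indeed the whole point of (iii) is to be an input about $Y$ that is \emph{independent} of the inequality of Lemma \ref{lem:For-any-,} (which is used only later, in the proof of Theorem \ref{thm:For-any-i.i.d.-1}). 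What the paper actually does is simpler than either route and you brushed past it: the coupling of Proposition \ref{prop:}(\ref{enu:Suppose--is}) is a \emph{joint} coupling of the whole vector $(\hat Y_u)_{u\in\mathcal S}$ with a genuinely i.i.d. vector $(Y_u)_{u\in\mathcal S}\sim q^{\mathcal S}$, agreeing except at starred sites, so it controls fluctuations, not just marginals: $\bigl|[\sum_{u\in\mathcal S;\hat Y_u\in B}(h+\log q(\hat Y_u))]^{+}-[\sum_{u\in\mathcal S}(h+\log q(Y_u))]^{+}\bigr|=O(|\mathcal J^c\cap\mathcal S|)$, which after dividing by $\sqrt{N^d}$ is $o(1)$ in expectation by (i). The only remaining issue is that the coupling requires $B(\mathcal S,\lfloor\delta_N N\rfloor)$ to fit inside one translated window $u_0+[1,N]^d$, which fails for the full torus; the paper handles this by splitting $[1,N]^d$ into the bulk $\mathcal S_0=[\delta_N N,(1-\delta_N)N]^d$ and $2^d$ corner pieces $\mathcal S_i$, each satisfying the window condition, applying the coupling and the i.i.d. CLT on each piece, and noting that $\sqrt{|\mathcal S_0|/|\mathbb T_N^d|}=1-o(1)$ while $\sqrt{|\mathcal S_i|/|\mathbb T_N^d|}=O(\sqrt{\delta_N})=o(1)$ for $i\geq 1$, so only the bulk contributes in the limit. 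Without this (or an equivalent) device, your write-up of (iii) does not constitute a proof.
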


\begin{proof}
The claim in \ref{enu:} follows from Corollary \ref{cor:Suppose-that-.}.

By the central limit theorem, we have the following weak convergence
\[
\frac{1}{\sqrt{|\mathbb{T}_{N}^{d}|}}\sum_{u\in\mathbb{T}_{N}^{d}}(h+\log p(\hat{X}_{u}))\underset{{\scriptscriptstyle N\rightarrow\infty}}{\overset{d}{\longrightarrow}}Z
\]
where Z is a normal random variable with zero mean and variance equal
to $\text{Var}(\log p(X_{0}))$. Moreover, as $x^{+}=o_{x\rightarrow\infty}(x^{2})$
and for all $N$ 
\[
\mathbf{E}\left[\frac{1}{\sqrt{|\mathbb{T}_{N}^{d}|}}\sum_{u\in\mathbb{T}_{N}^{d}}(h+\log p(\hat{X}_{u}))\right]^{2}=\text{Var}(\log p(X_{0}))<\infty,
\]
it follows that
\[
\mathbf{E}\left[\frac{1}{\sqrt{|\mathbb{T}_{N}^{d}|}}\sum_{u\in\mathbb{T}_{N}^{d}}(h+\log p(\hat{X}_{u}))\right]^{+}\overset{{\scriptscriptstyle N\rightarrow\infty}}{\longrightarrow}\mathbf{E}\left[Z\right]^{+}=\frac{\text{Var}(\log p(X_{0}))}{\sqrt{2\pi}}
\]
(see e.g. \cite{key-9} Exercise 3.2.5.), and \ref{enu:-1} follows.

For the respective claim in \ref{enu:-2} for the sequence $\left(\log q(\hat{Y}_{u})\right)_{u\in\mathbb{T}_{N}^{d}}$,
we split $[1,N]^{d}$ into $2^{d}+1$ sets as follows: write $\mathcal{S}_{0}=[\delta_{N}N,N(1-\delta_{N})]^{d}$,
and enumerating $\{0,N/2\}^{d}=\{v_{1},...,v_{2^{d}}\}$, let 
\[
\mathcal{S}_{i}=(v_{i}+[1,N/2]^{d})\backslash\mathcal{S}_{0}.
\]

We have that 
\[
B(\mathcal{S}_{0},\delta_{N}N)\subset[1,N]^{d},
\]
and for all $1\leq i\leq2^{d}$ one has that 
\[
B(\mathcal{S}_{i},\delta_{N}N)\subset v_{i}-\delta_{N}\overrightarrow{N}+[1,N]^{d}.
\]
(as we took $N$ to be large enough, we can assume that $\delta_{N}<\frac{1}{2}$
so that the last claim clearly takes place). Applying the coupling
as in \ref{enu:Suppose--is} of Proposition \ref{prop:}, for each
$0\leq i\leq2^{d}$ we have
\begin{equation}
\mathbf{E}\left[\sum_{{\scriptscriptstyle u\in\mathcal{S}_{i};\hat{Y}_{u}\in B}}(h+\log q(\hat{Y}_{u}))\right]^{+}=\mathbf{E}\left[\sum_{u\in\mathcal{S}_{i}}(h+\log q(Y_{u}))\right]^{+}+O(\mathbf{E}|\mathcal{J}_{N,\lfloor\delta_{N}N\rfloor}^{c}\cap\mathcal{S}_{i}|).\label{eq:-7}
\end{equation}

Scaling the first summand by $\frac{1}{\sqrt{|\mathcal{S}_{i}|}}$,
its limit when $N\rightarrow\infty$ is again given by the central
limit theorem:
\begin{align*}
\mathbf{E}\left[\frac{1}{\sqrt{|\mathcal{S}_{i}|}}\sum_{u\in\mathcal{S}_{i}}(h+\log q(Y_{u}))\right]^{+}\longrightarrow\frac{\text{Var}(\log q(Y_{0}))}{\sqrt{2\pi}}.
\end{align*}

Thus, scaling the first summand on the right of (\ref{eq:-7}) by
$\frac{1}{\sqrt{|\mathbb{T}_{N}^{d}|}}$, the expression differs between
$\mathcal{S}_{0}$ and the other sets $\left\{ \mathcal{S}_{i}\right\} _{1\leq i\leq2^{d}}$:
For $\mathcal{S}_{0}$ we have 
\[
\sqrt{\nicefrac{|\mathcal{S}_{0}|}{|\mathbb{T}_{N}^{d}|}}=1-O(\delta_{N})=1-o(1),
\]
while for all $1\leq i\leq2^{d}$ we have 
\[
\sqrt{\nicefrac{|\mathcal{S}_{i}|}{|\mathbb{T}_{N}^{d}|}}=O(\sqrt{\delta_{N}})=o(1),
\]
and so
\begin{align*}
\frac{1}{\sqrt{|\mathbb{T}_{N}^{d}|}}\mathbf{E}\left[\sum_{u\in\mathcal{S}_{i}}(h+\log q(Y_{u}))\right]^{+} & =\sqrt{\frac{|\mathcal{S}_{i}|}{|\mathbb{T}_{N}^{d}|}}\mathbf{E}\left[\frac{1}{\sqrt{|\mathcal{S}_{i}|}}\sum_{u\in\mathcal{S}_{i}}(h+\log q(Y_{u}))\right]^{+}\\
 & =\begin{cases}
\frac{\text{Var}(\log q(Y_{0}))}{\sqrt{2\pi}}+o(1) & i=0\\
o(1) & 1\leq i\leq2^{d}
\end{cases}
\end{align*}
and overall we get
\[
\begin{array}{r}
{\displaystyle \mathbf{E}\left[\frac{1}{\sqrt{|\mathbb{T}_{N}^{d}|}}\sum_{{\scriptscriptstyle u\in\mathbb{T}_{N}^{d};\hat{Y}_{u}\in B}}(h+\log q(\hat{Y}_{u}))\right]^{+}=\frac{1}{\sqrt{|\mathbb{T}_{N}^{d}|}}\mathbf{E}\left[\sum_{i=0}^{2^{d}}\sum_{{\scriptscriptstyle u\in\mathcal{S}_{i};\hat{Y}_{u}\in B}}(h+\log q(\hat{Y}_{u}))\right]^{+}}\\
{\displaystyle \in\frac{1}{\sqrt{|\mathbb{T}_{N}^{d}|}}\mathbf{E}\left[\sum_{i=0}^{2^{d}}\sum_{{\scriptscriptstyle u\in\mathcal{S}_{i}}}(h+\log q(Y_{u}))\right]^{+}\pm\frac{O(\mathbf{E}|\mathcal{J}_{N,\lfloor\delta_{N}N\rfloor}^{c}|)}{\sqrt{|\mathbb{T}_{N}^{d}|}}}\\
{\displaystyle =\frac{\text{Var}(\log q(Y_{0}))}{\sqrt{2\pi}}\pm o(1)\pm\frac{O(\mathbf{E}|\mathcal{J}_{N,\lfloor\delta_{N}N\rfloor}^{c}|)}{\sqrt{|\mathbb{T}_{N}^{d}|}}}
\end{array}
\]
By \ref{enu:} of the present proposition, the last summand is of
order $o(1)$. This yields the claim in \ref{enu:-2} and completes
the proof.
\end{proof}
We are now ready to prove Theorem \ref{thm:For-any-i.i.d.-1}.

\noindent\textbf{Theorem \ref{thm:For-any-i.i.d.-1}.} \textit{For
any i.i.d. $\mathbb{Z}^{d}$-processes $X$ and $Y$ of equal entropy
$h$, if there exists a finitary map $\phi:X\rightarrow Y$ with $\mathbf{E}[R_{\phi}^{d/2}]<\infty$,
then $\text{Var}(I(X_{0}))=\text{Var}(I(Y_{0}))$.}
\begin{proof}
Fix $N\in\mathbb{N}$, and consider the $(\mathbb{T}_{N}^{d},\lfloor\delta_{N}N\rfloor)$-modeling
of $\phi$, consisting of the process $\hat{X}\sim(A^{\mathbb{T}_{N}^{d}},p^{\mathbb{T}_{N}^{d}})$
and the mapping $\hat{\phi}^{\lfloor\delta_{N}N\rfloor}(\hat{X})=\hat{Y}$.
Denoting as before the marginals of $X$ and $Y$ by $p$ and $q$
respectively, Lemma \ref{lem:For-any-,} asserts that a.s.
\[
\prod_{u\in\mathbb{T}_{N}^{d}}p(\hat{X}_{u})\leq\prod_{\begin{array}{c}
u\in\mathbb{T}_{N}^{d}\\
\hat{Y}_{u}\in B
\end{array}}q(\hat{Y}_{u}).
\]
Taking logarithms on both sides, adding $|\mathbb{T}_{N}^{d}|h$ to
both sides and then subtracting one side from the other and factorizing
by $\sqrt{|\mathbb{T}_{N}^{d}|}$, gives
\begin{equation}
\underbrace{\frac{1}{\sqrt{|\mathbb{T}_{N}^{d}|}}\sum_{u\in\mathbb{T}_{N}^{d}}\left(h+\log p(\hat{X}_{u})\right)}_{A}-\underbrace{\frac{1}{\sqrt{|\mathbb{T}_{N}^{d}|}}\sum_{u\in\mathbb{T}_{N}^{d}}\left(h+\mathbf{1}_{\{\hat{Y}_{u}\in B\}}\log q(\hat{Y}_{u})\right)}_{B}\leq0\label{eq:-3}
\end{equation}

Denote by $A$ and $B$ the two summands on the left hand side of
the above inequality. \foreignlanguage{american}{Notice that for any
fixed $u\in\mathbb{T}_{N}^{d}$, using the coupling of Proposition
}\ref{prop:}\ref{enu:Suppose--is}\foreignlanguage{american}{ with
$\mathcal{S}=\{u\}$, one has
\begin{align*}
\mathbf{E}\left[\mathbf{1}_{\{\hat{Y}_{u}\in B\}}\log q(\hat{Y}_{u})\right] & =\mathbf{E}\left[\mathbf{1}_{\{\hat{Y}_{u}\in B\}}\log q(\hat{Y}_{u})-\log q(Y_{u})\right]+\mathbf{E}\left[\log q(Y_{u})\right]\\
 & =O(\mathbf{E}\mathbf{1}_{\{\hat{Y}_{u}=*\}})-h.
\end{align*}
}

\selectlanguage{american}%
This implies that\foreignlanguage{english}{ the expectation of the
left hand side of (\ref{eq:-3}) equals to 
\[
\mathbf{E}[A-B]=O\left(\frac{\mathbf{E}|\{u:\hat{Y}_{u}=*\}|}{\sqrt{|\mathbb{T}_{N}^{d}|}}\right)=O\left(\frac{\mathbf{E}[|\mathcal{J}_{N,\lfloor\delta_{N}N\rfloor}^{c}|]}{\sqrt{|\mathbb{T}_{N}^{d}|}}\right)
\]
which by Proposition \ref{prop:The-following-holds:}\ref{enu:} goes
to zero as $N\rightarrow\infty$. For any two reals $a,b\in\mathbb{R}$
one has that 
\[
a-b=(a-b)^{+}-(b-a)^{+}
\]
 and since $(A-B)^{+}=0$ (by (\ref{eq:-3})), we get 
\[
\mathbf{E}(B-A)^{+}=\mathbf{E}(A-B)^{+}+o(1)=o(1).
\]
}

\selectlanguage{english}%
Using the triangle inequality $a^{+}-b^{+}\leq(a-b)^{+}$, we get
\begin{align*}
\mathbf{E}A^{+}-\mathbf{E}B^{+} & \leq\mathbf{E}(A-B)^{+}=0\\
\mathbf{E}B^{+}-\mathbf{E}A^{+} & \leq\mathbf{E}(B-A)^{+}=o(1)
\end{align*}
which yields
\[
\mathbf{E}B^{+}=\mathbf{E}A^{+}+o(1)
\]
Using the identities $\mathbf{E}A^{+}=\frac{\text{Var}(\log p(X_{0}))}{\sqrt{2\pi}}\pm o(1)$
and $\mathbf{E}B^{+}=\frac{\text{Var}(\log q(Y_{0}))}{\sqrt{2\pi}}\pm o(1)$
of Proposition \ref{prop:The-following-holds:} and taking $N\nearrow\infty$,
we get
\[
\frac{\text{Var}(\log p(X_{0}))}{\sqrt{2\pi}}=\frac{\text{Var}(\log q(Y_{0}))}{\sqrt{2\pi}}
\]
and the proof is complete.
\end{proof}

\section{\protect\label{sec:A-Schmidt-type-theorem}A Schmidt-type theorem}

In this section we will prove Theorem \ref{thm:Let--and-1}. Some
preparatory background will be presented, followed by the proof itself.
\begin{prop}
\label{lem:Suppose-that-for}Suppose that for $i=1,2$ and all $N\in\mathbb{N}$,
$f_{i}=f_{i,N}$ are some random variables with
\begin{equation}
||f_{1}-f_{2}||_{1}=e^{-\Omega(N)}\label{eq:-16}
\end{equation}
and
\begin{equation}
||f_{i}||_{\infty}=N^{O(1)}\quad(i=1,2).\label{eq:-17}
\end{equation}

Then, for any $k\in\mathbb{N}$, 
\[
\left|||f_{1}||_{k}^{k}-||f_{2}||_{k}^{k}\right|=e^{-\Omega_{k}(N)}
\]
\end{prop}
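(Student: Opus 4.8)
The plan is to bound everything by the elementary pointwise identity $a^{k}-b^{k}=(a-b)\sum_{j=0}^{k-1}a^{j}b^{k-1-j}$, applied with $a=|f_{1}|$ and $b=|f_{2}|$, and then take expectations.

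First I would record that $\|f_{i}\|_{k}^{k}=\mathbf{E}[|f_{i}|^{k}]$, so that by the triangle inequality
\[
\bigl|\,\|f_{1}\|_{k}^{k}-\|f_{2}\|_{k}^{k}\,\bigr|\le\mathbf{E}\bigl[\,\bigl|\,|f_{1}|^{k}-|f_{2}|^{k}\,\bigr|\,\bigr].
\]
Pointwise, the factorization above together with $\bigl|\,|f_{1}|-|f_{2}|\,\bigr|\le|f_{1}-f_{2}|$ and the uniform bound $\max(|f_{1}|,|f_{2}|)\le M:=\max(\|f_{1}\|_{\infty},\|f_{2}\|_{\infty})$ gives
\[
\bigl|\,|f_{1}|^{k}-|f_{2}|^{k}\,\bigr|=\bigl|\,|f_{1}|-|f_{2}|\,\bigr|\cdot\Bigl|\sum_{j=0}^{k-1}|f_{1}|^{j}|f_{2}|^{k-1-j}\Bigr|\le k\,M^{k-1}\,|f_{1}-f_{2}|.
\]
Taking expectations then yields $\bigl|\,\|f_{1}\|_{k}^{k}-\|f_{2}\|_{k}^{k}\,\bigr|\le k\,M^{k-1}\,\|f_{1}-f_{2}\|_{1}$.

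It remains to insert the hypotheses. By (\ref{eq:-17}) we have $M=N^{O(1)}$, hence $M^{k-1}=N^{O(k)}$, and by (\ref{eq:-16}) we have $\|f_{1}-f_{2}\|_{1}=e^{-\Omega(N)}$. Therefore the right-hand side is at most $k\cdot N^{O(k)}\cdot e^{-cN}$ for some fixed $c>0$, and since $N^{O(k)}e^{-cN}=e^{-cN+O(k\log N)}=e^{-\Omega_{k}(N)}$ (the polynomial prefactor only worsens the exponential rate by an amount controlled by $k$, with the implicit threshold on $N$ likewise depending on $k$), we conclude $\bigl|\,\|f_{1}\|_{k}^{k}-\|f_{2}\|_{k}^{k}\,\bigr|=e^{-\Omega_{k}(N)}$, as claimed. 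The argument is entirely routine; the only point needing a moment's care is this last line — checking that multiplying an $e^{-\Omega(N)}$ quantity by a $k$-dependent polynomial in $N$ still leaves a quantity that is $e^{-\Omega_{k}(N)}$ — which is precisely what the $k$-subscripted asymptotic notation is designed to absorb, so there is no substantive obstacle.
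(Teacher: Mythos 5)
Your proof is correct, and it takes a somewhat different route from the paper's. You work pointwise: the algebraic identity $a^{k}-b^{k}=(a-b)\sum_{j=0}^{k-1}a^{j}b^{k-1-j}$ with $a=|f_{1}|$, $b=|f_{2}|$, combined with $\bigl||f_{1}|-|f_{2}|\bigr|\le|f_{1}-f_{2}|$ and the uniform bound $M=N^{O(1)}$, gives directly $\bigl|\|f_{1}\|_{k}^{k}-\|f_{2}\|_{k}^{k}\bigr|\le kM^{k-1}\|f_{1}-f_{2}\|_{1}$ after taking expectations. The paper instead argues at the level of norms: it first bounds $\|f_{1}-f_{2}\|_{k}$ via H\"older's inequality $\|(f_{1}-f_{2})^{k-1}(f_{1}-f_{2})\|_{1}\le\|(f_{1}-f_{2})^{k-1}\|_{\infty}\|f_{1}-f_{2}\|_{1}$, then uses Minkowski's inequality and a binomial expansion of $\bigl(e^{-\Omega(N)}+\|f_{2}\|_{k}\bigr)^{k}$ to transfer the bound to the $k$th powers. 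Both arguments consume the hypotheses in exactly the same way (the $L^{\infty}$ bound absorbs the $k-1$ extra factors, the $L^{1}$ bound supplies the exponential decay, and a $k$-dependent polynomial prefactor is swallowed by $e^{-\Omega_{k}(N)}$), but your version is more elementary and arguably cleaner: it avoids the intermediate $L^{k}$-distance estimate and the binomial expansion, and yields an explicit constant $kM^{k-1}$ in one step. Your closing remark about the polynomial-times-exponential bookkeeping is exactly the point where the $k$-dependence of the implied constants enters, and you handle it correctly.
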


The lowercase $k$ in the notation $\Omega_{k}(N)$ signifies that
the constants provided by the $\Omega(N)$ notation depend on the
value of $k$.
\begin{proof}
We have
\begin{align*}
||f_{1}-f_{2}||_{k} & =\left(||(f_{1}-f_{2})^{k-1}(f_{1}-f_{2})||_{1}\right)^{1/k}\\
 & \leq||(f_{1}-f_{2})^{k-1}||_{\infty}^{1/k}||f_{1}-f_{2}||_{1}^{1/k}\\
 & =N^{O(1)}O(e^{-N/k})\\
 & =e^{-\Omega_{k}(N)},
\end{align*}
(here we used Hölder's inequality $||fg||_{1}\leq||f||_{\infty}||g||_{1}$.)

We now have
\begin{align*}
||f_{1}||_{k}^{k} & =||f_{1}-f_{2}+f_{2}||_{k}^{k}\\
 & \leq\left(||f_{1}-f_{2}||_{k}+||f_{2}||_{k}\right)^{k}\\
 & =\left(e^{-\Omega(N)}+||f_{2}||_{k}\right)^{k}\\
 & =||f_{2}||_{k}^{k}+e^{-\Omega(N)}N^{O(k-1)}\\
 & =||f_{2}||_{k}^{k}+e^{-\Omega_{k}(N)}.
\end{align*}

This inequality, together with the one obtained by flipping the roles
of $f_{1}$ and $f_{2}$ above, implies the claimed inequality.
\end{proof}
\begin{prop}
\label{lem:For-any-,-1}For any $d,k\in\mathbb{N}$, $2k<m\in\mathbb{N}$
and $N=m\cdot2(k+1)$, given a set $\mathcal{S}\subset\mathbb{T}_{N}^{d}$
of size $|\mathcal{S}|\leq k$, there is some $\boldsymbol{v}\in\mathbb{Z}^{d}$
so that under the natural quotient map $\pi:\mathbb{Z}^{d}\rightarrow\mathbb{T}_{N}^{d}$,
\[
\bigcup_{\boldsymbol{t}\in\pi^{-1}(\mathcal{S})\cap(\boldsymbol{v}+[1,N]^{d})}\boldsymbol{t}+[-m,m]^{d}\subset\boldsymbol{v}+[1,N]^{d}.
\]
\end{prop}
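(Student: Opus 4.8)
The plan is to treat the $d$ coordinates separately and reduce to a one--dimensional pigeonhole statement on the cyclic group $\mathbb{Z}/N\mathbb{Z}$. Write $\boldsymbol{v}=(v_1,\dots,v_d)$. For each $i$ the interval $[v_i+1,v_i+N]$ is a complete set of representatives for $\mathbb{Z}/N\mathbb{Z}$, and a lattice point $\boldsymbol{t}$ lying in $\boldsymbol{v}+[1,N]^d$ satisfies $\boldsymbol{t}+[-m,m]^d\subset\boldsymbol{v}+[1,N]^d$ if and only if $\boldsymbol{t}_i\in[v_i+m+1,\,v_i+N-m]$ for every $i$ (since a product of intervals is contained in another exactly when this holds coordinatewise). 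As each point of $\pi^{-1}(\mathcal{S})\cap(\boldsymbol{v}+[1,N]^d)$ is the unique representative in that fundamental domain of some $s\in\mathcal{S}$, it therefore suffices to choose, for each coordinate $i$ independently, a value $v_i$ such that the residues $S_i:=\{\,s_i\bmod N:s\in\mathcal{S}\,\}\subseteq\mathbb{Z}/N\mathbb{Z}$, of which there are at most $k$, all fall inside $[v_i+m+1,\,v_i+N-m]$ once represented in $[v_i+1,v_i+N]$.

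Fix $i$ and picture $\mathbb{Z}/N\mathbb{Z}$ as a cycle of $N=2m(k+1)$ positions. Choosing $v_i$ amounts to choosing a ``cut'' of the cycle, located in the gap between positions $v_i$ and $v_i+1$; with this cut the forbidden set $[v_i+1,v_i+m]\cup[v_i+N-m+1,v_i+N]$ is precisely the $m$ positions immediately following the cut together with the $m$ positions immediately preceding it, that is, an arc of $2m$ consecutive positions straddling the cut. Hence it is enough to find an arc of $2m$ consecutive positions of $\mathbb{Z}/N\mathbb{Z}$ disjoint from $S_i$ and then place the cut at its midpoint.

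Such an arc exists by counting. If $S_i=\emptyset$ this is clear since $N\ge 2m$. Otherwise the $\ell:=|S_i|\le k$ points of $S_i$ split the cycle into $\ell$ gaps whose sizes sum to $N-\ell$; were every gap of size at most $2m-1$, we would get $N-\ell\le\ell(2m-1)$, hence $N\le 2m\ell\le 2mk<2m(k+1)=N$, a contradiction. So some gap contains at least $2m$ consecutive positions free of $S_i$; writing such an arc as the positions $a+1,\dots,a+2m$ (with $a\in\mathbb{Z}$, indices read mod $N$) and taking $v_i=a+m$, the sets $[v_i+1,v_i+m]=\{a+m+1,\dots,a+2m\}$ and $[v_i+N-m+1,v_i+N]\equiv\{a+1,\dots,a+m\}\pmod N$ are both disjoint from $S_i$, exactly as required.

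Performing this for $i=1,\dots,d$ produces $\boldsymbol{v}$. Then for each $s\in\mathcal{S}$ its representative $\boldsymbol{t}\in\pi^{-1}(\{s\})\cap(\boldsymbol{v}+[1,N]^d)$ has $\boldsymbol{t}_i\in[v_i+m+1,\,v_i+N-m]$ in every coordinate, so $\boldsymbol{t}+[-m,m]^d\subset\boldsymbol{v}+[1,N]^d$, and taking the union over all such $\boldsymbol{t}$ gives the claim. The only step requiring a little care is the bookkeeping in the second paragraph---verifying that the forbidden positions near the cut form exactly a length-$2m$ arc centered at the cut---after which the pigeonhole count in the third paragraph is the substance of the argument. (Note that this count uses only $N>2mk$, so the hypothesis $2k<m$ is not needed for the present statement and enters only in later applications of the proposition.)
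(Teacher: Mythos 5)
Your proof is correct and follows essentially the same route as the paper's: reduce to each coordinate separately and use a counting/pigeonhole argument to place the cut $v_i$ so that no point of $\mathcal{S}$ has its $m$-neighborhood crossing the boundary of the fundamental domain $\boldsymbol{v}+[1,N]^d$. The only (harmless) difference is bookkeeping — you locate a gap of $2m$ consecutive residues free of $S_i$ and cut at its midpoint, while the paper picks $v_i$ outside the $(2m+1)$-shadow $\{u_i:u\in B_{\mathbb{T}_N^d}(\mathcal{S},m)\}$ — and your observation that the count needs only $N>2mk$ (so $2k<m$ is not essential here) is accurate.
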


\begin{proof}
For each coordinate $1\leq i\leq d$, the $i$th coordinate values
of the set $B_{\mathbb{T}_{N}^{d}}(\mathcal{S},m)$ doesn't contain
all the values in $[1,N]$, as:
\begin{align*}
|\{u_{i}:u\in B_{\mathbb{T}_{N}^{d}}(\mathcal{S},m)\}| & \leq|\mathcal{S}|\cdot(2m+1)\\
 & \leq k\cdot(2m+1)\\
 & \leq N-2m+k\\
 & <N.
\end{align*}
Writing $\sigma:\mathbb{Z}\rightarrow\mathbb{T}$ for the one-dimensional
natural quotient map, pick any $v_{i}\in[1,N]\backslash\sigma^{-1}\left(\{u_{i}:\boldsymbol{u}\in B_{\mathbb{T}_{N}^{d}}(\mathcal{S},m)\}\right)$.
Its choice implies that both $v_{i},v_{i}+N$ are not in the set $\sigma^{-1}\left(\{u_{i}:\boldsymbol{u}\in B_{\mathbb{T}_{N}^{d}}(\mathcal{S},m)\}\right)$.
As this set is a union of segments in $\mathbb{Z}$, it follows that
each of the segments in $\sigma^{-1}\left(\{u_{i}:u\in B_{\mathbb{T}_{N}^{d}}(\mathcal{S},m)\}\right)$
is either completely included in $v_{i}+[1,N]$ or completely excluded
from it. Since for each $\boldsymbol{s}=(s_{1},...,s_{d})\in\mathcal{S}$
there exists a unique candidate $\{t\}=\sigma^{-1}(s_{i})\cap(v_{i}+[1,N])$,
its segment $[t-m,t+m]$ must be completely included in $v_{i}+[1,N]$.
Thus we get:
\begin{equation}
(v_{i}+[1,N])\supset\bigcup_{t\in\sigma^{-1}(\{s_{i}:\boldsymbol{s}\in S\})\cap(v_{i}+[1,N])}[t-m,t+m].\label{eq:-10}
\end{equation}

Letting $\boldsymbol{v}=(v_{1},...,v_{d})$ and observing (\ref{eq:-10})
for each of the coordinates $1\leq i\leq d$ gives
\begin{align*}
\bigcup_{t\in\pi^{-1}(\mathcal{S})\cap(\boldsymbol{v}+[1,N]^{d})}t+[-m,m]^{d} & \subset\mathsf{X}_{i=1}^{d}\left(\bigcup_{t\in\sigma^{-1}(\{s_{i}:\boldsymbol{s}\in S\})\cap(v_{i}+[1,N])}[t-m,t+m]\right)\\
 & \subset\mathsf{X}_{i=1}^{d}\left(v_{i}+[1,N]\right)\\
 & =\boldsymbol{v}+[1,N]^{d}
\end{align*}
as claimed.
\end{proof}
We are now ready to prove Theorem \ref{thm:Let--and-1}:

\noindent\textbf{Theorem \ref{thm:Let--and-1}.} \textit{Let $X$
and $Y$ be two i.i.d. $\mathbb{Z}^{d}$-processes of equal entropy
$h$, with marginal measures $p,q$ respectively. Suppose that for
some constant $c>0$ there is a homomorphism $\phi(X)=Y$ that satisfies
$\mathbf{P}[R_{\phi}>n]=O(e^{-cn})$. Then $p=q$ up to permutation
on the symbols.}
\begin{proof}
We will show by induction that the information functions applied to
the zero marginals, $I_{p}(X_{0})=\log p(X_{0})^{-1}$ and $I_{q}(Y_{0})=\log q(Y_{0})^{-1}$,
agree on all their moments:
\[
\begin{array}{cr}
\mathbf{E}[I_{p}(X_{0})^{k}]=\mathbf{E}[I_{q}(Y_{0})^{k}] & \qquad\forall k\in\mathbb{N}\end{array},
\]
 and so, being bounded random variables (in fact, finite valued),
they share the same distributions. By assumption, both agree on their
first moment, being the processes entropy. Let $k>1$ and suppose
that $\mathbf{E}[I_{p}(X_{0})^{i}]=\mathbf{E}[I_{q}(Y_{0})^{i}]$
for all $1\leq i<k$. 

Notice that we can express the $k$th moment of $\log p(X_{0})^{-1}$
by
\begin{align}
\mathbf{E}[I_{p}(X_{0})^{k}] & =\frac{1}{N^{d}}\mathbf{E}[\sum_{u\in[1,N]^{d}}I_{p}(X_{u})^{k}]\label{eq:-8}\\
 & =\frac{1}{N^{d}}\mathbf{E}[\sum_{u\in[1,N]^{d}}I_{p}(X_{u})]^{k}-\mathbf{E}[F(\{I_{p}(X_{u})\}_{u\in[1,N]^{d}})]\\
 & =\frac{1}{N^{d}}\mathbf{E}[\sum_{u\in[1,N]^{d}}I_{p}(X_{u})]^{k}-f(\{\mathbf{E}[I_{p}(X_{u})^{i}]\}_{1\leq i<k})]\label{eq:-5}
\end{align}
where $N>0$, $F=F\left(\left\{ x_{u}\right\} _{u\in\mathbb{T}_{N}^{d}}\right)$
is some multinomial of degree $k$ on $N^{d}$ variables, none of
whose degree-$k$ monomials consists of one variable, and $f$ is
some muiltinomial on $k-1$ variables (the equality (\ref{eq:-5})
follows from the i.i.d. property of $\{I_{p}(X_{u})\}_{u\in[1,N]^{d}}$).
Similarly we have
\[
\mathbf{E}[I_{q}(Y_{0})^{k}]=\frac{1}{N^{d}}\mathbf{E}[\sum_{u\in[1,N]^{d}}I_{q}(Y_{u})]^{k}-f(\{\mathbf{E}[I_{q}(Y_{u})^{i}]\}_{1\leq i<k})].
\]

By the induction assumption, 
\[
\begin{array}{cr}
\mathbf{E}[I_{p}(X_{0})^{i}]=\mathbf{E}[I_{q}(Y_{0})^{i}] & \qquad1\leq i<k\end{array},
\]
which implies 
\[
f(\{\mathbf{E}[I_{p}(X_{u})^{i}]\}_{1\leq i<k})=f(\{\mathbf{E}[I_{q}(Y_{u})^{i}]\}_{1\leq i<k})].
\]

Thus proving the equality $\mathbf{E}[I_{p}(X_{0})^{k}]=\mathbf{E}[I_{q}(Y_{0})^{k}]$
is equivalent to showing that
\begin{equation}
\frac{1}{N^{d}}\mathbf{E}[\sum_{u\in[1,N]^{d}}I_{p}(X_{u})]^{k}=\frac{1}{N^{d}}\mathbf{E}[\sum_{u\in[1,N]^{d}}I_{q}(Y_{u})]^{k}.\label{eq:-9}
\end{equation}

Let us restrict our discussion to all those $N$ which are a multiple
$N=m\cdot2(k+1)$, with $m$ being any integer $m>k$. For such $N$
(and its respective $m=N/(2(k+1))$), consider the $(\mathbb{T}_{N}^{d},m)$-modeling
of $\phi$ as defined in Definition \ref{def:Letting--and}. Clearly,
in the left hand side of (\ref{eq:-9}), the sites of $X$ can be
replaced with sites of $\hat{X}=\left(\hat{X}_{u\in\mathbb{T}_{N}^{d}}\right)$:
\begin{equation}
\frac{1}{N^{d}}\mathbf{E}[\sum_{u\in[1,N]^{d}}I_{p}(X_{u})]^{k}=\frac{1}{|\mathbb{T}_{N}^{d}|}\mathbf{E}[\sum_{u\in\mathbb{T}_{N}^{d}}I_{p}(\hat{X}_{u})]^{k}.\label{eq:-14}
\end{equation}

We show now that one can replace the sites of $Y$ in the right hand
side of (\ref{eq:-9}) with sites of $\hat{Y}=\left(\hat{Y}_{u\in\mathbb{T}_{N}^{d}}\right)$
with negligible change in the expression's value. More precisely,
we claim that
\begin{equation}
\frac{1}{N^{d}}\mathbf{E}[\sum_{u\in[1,N]^{d}}I_{q}(Y_{u})]^{k}=\frac{1}{|\mathbb{T}_{N}^{d}|}\mathbf{E}[\sum_{u\in\mathbb{T}_{N}^{d}}\mathbf{1}_{\{\hat{Y}_{u}\in B\}}I_{q}(\hat{Y}_{u})]^{k}\pm e^{-\Omega(N)}.\label{eq:-12}
\end{equation}

To see why this holds, note that the expression $\mathbf{E}[\sum_{u\in[1,N]^{d}}I_{q}(Y_{u})]^{k}$
can be written as a sum of expectations of products, of the kind
\begin{equation}
\mathbf{E}[\sum_{u\in[1,N]^{d}}I_{q}(Y_{u})]^{k}=\sum_{\mathcal{S}\in([1,N]^{d})^{\times k}}\mathbf{E}[{\displaystyle \prod_{u\in\mathcal{S}}}I_{q}(Y_{u})].\label{eq:-13}
\end{equation}

Fix such an $\mathcal{S}$ and consider its projection $\pi(\mathcal{S})\subset\mathbb{T}_{N}^{d}$
under the natural quotient map $\pi:\mathbb{Z}^{d}\rightarrow\mathbb{T}_{N}^{d}$.
With our assumption that $m=N/2(k+1)>k$, Proposition \ref{lem:For-any-,-1}
provides some $v\in\mathbb{Z}^{d}$ so that
\[
\bigcup_{t\in\pi^{-1}(\pi(\mathcal{S}))\cap(v+[1,N]^{d})}t+[-m,m]^{d}\subset v+[1,N]^{d},
\]
so that one can use the coupling of Proposition \ref{prop:}.\ref{enu:Suppose--is}
to get that 
\[
\left|\mathbf{E}[\prod_{u\in\pi(\mathcal{S})}\mathbf{1}_{\{\hat{Y}_{u}\in B\}}I_{q}(\hat{Y}_{u})]-\mathbf{E}[{\displaystyle \prod_{u\in\pi^{-1}(\pi(\mathcal{S}))\cap(v+[1,N]^{d})}}I_{q}(Y_{u})]\right|=O(ke^{-N}).
\]

Applying the above coupling for all $N^{d\cdot k}$ summands in (\ref{eq:-13})
enlarges the error term to $O(N^{d\cdot k}ke^{-N})=e^{-\Omega(N)}$,
and equation (\ref{eq:-12}) follows as we claimed.

By (\ref{eq:-14}) and (\ref{eq:-12}) (which we have just proved),
the proof of (\ref{eq:-9}), and as a result the theorem itself, amounts
to showing that
\begin{equation}
\left|\frac{1}{|\mathbb{T}_{N}^{d}|}\mathbf{E}[\sum_{u\in\mathbb{T}_{N}^{d}}\mathbf{1}_{\{\hat{Y}_{u}\in B\}}I_{q}(\hat{Y}_{u})]^{k}-\frac{1}{|\mathbb{T}_{N}^{d}|}\mathbf{E}[\sum_{u\in\mathbb{T}_{N}^{d}}I_{p}(\hat{X}_{u})]^{k}\right|=o(1),\label{eq:-15}
\end{equation}
and the rest of the proof is dedicated to that.

We have for a.e. $\hat{X}$,
\begin{equation}
N^{d}\cdot\max_{a\in A}I_{p}(a)\geq\sum_{u\in\mathbb{T}_{N}^{d}}I_{p}(\hat{X}_{u})-\sum_{u\in\mathbb{T}_{N}^{d}}\mathbf{1}_{\{\hat{Y}_{u}\in B\}}I_{q}(\hat{Y}_{u})\geq0\label{eq:-4}
\end{equation}
where the left inequality is just a crude bound, and the right inequality
is implied by Lemma \ref{lem:For-any-,}. Furthermore, using the coupling
of Proposition \ref{prop:}.\ref{enu:Suppose--is}, one has
\begin{align*}
\mathbf{E}[\mathbf{1}_{\{\hat{Y}_{u}\in B\}}I_{q}(\hat{Y}_{u})] & =h\pm O(\mathbf{P}[R_{\phi}>m])\\
 & =h\pm e^{-\Omega(m)}\\
 & =h\pm e^{-\Omega(N)}
\end{align*}
so that the expectation of the absolute value of the middle expression
in (\ref{eq:-4}) is bounded by 
\begin{align}
\begin{array}{l}
\mathbf{E}\left|\sum_{u\in\mathbb{T}_{N}^{d}}I_{p}(\hat{X}_{u})-\sum_{u\in\mathbb{T}_{N}^{d}}\mathbf{1}_{\{\hat{Y}_{u}\in B\}}I_{q}(\hat{Y}_{u})\right|=\\
\begin{array}{lrl}
 &  & =\mathbf{E}[\sum_{u\in\mathbb{T}_{N}^{d}}I_{p}(\hat{X}_{u})-\sum_{u\in\mathbb{T}_{N}^{d}}\mathbf{1}_{\{\hat{Y}_{u}\in B\}}I_{q}(\hat{Y}_{u})]\\
 &  & =\sum_{u\in\mathbb{T}_{N}^{d}}\mathbf{E}[I_{p}(\hat{X}_{u})]-\sum_{u\in\mathbb{T}_{N}^{d}}\mathbf{E}[\mathbf{1}_{\{\hat{Y}_{u}\in B\}}I_{q}(\hat{Y}_{u})]\\
 &  & =N^{d}h-N^{d}h\pm N^{d}e^{-\Omega(N)}\\
 &  & =e^{-\Omega(N)}.
\end{array}
\end{array}\label{eq:-11}
\end{align}

Taking 
\begin{align*}
f_{1,N} & =\sum_{u\in\mathbb{T}_{N}^{d}}I_{p}(\hat{X}_{u})\\
f_{2,N} & =\sum_{u\in\mathbb{T}_{N}^{d}}\mathbf{1}_{\{\hat{Y}_{u}\in B\}}I_{q}(\hat{Y}_{u})
\end{align*}
we have that $||f_{i,N}||_{\infty}=O(N^{d})$ for both $i=1,2$, and
that $||f_{1,N}-f_{2,N}||_{1}=e^{-\Omega(N)}$ by (\ref{eq:-11}).
Thus we can use Proposition \ref{lem:Suppose-that-for} to conclude
that (\ref{eq:-15}) takes place, as we claimed. This proves the induction
step and completes the proof of the theorem.
\end{proof}

\lyxaddress{uriel.gabor@gmail.com}

\lyxaddress{Einstein Institute of Mathematics}

\lyxaddress{The Hebrew University of Jerusalem}

\lyxaddress{Edmond J. Safra Campus, Jerusalem, 91904, Israel}
\end{document}